\documentclass[10pt]{amsart}
\usepackage{graphicx}
\usepackage{amssymb}
\usepackage{amsmath,amsfonts,amssymb,amsthm,amscd,latexsym,euscript,xypic}
\usepackage[all]{xy}
\usepackage{verbatim}
\usepackage{url}
\usepackage{bbm}

\newtheorem{theorem}{Theorem}
\newtheorem*{maintheorem*}{Main Theorem}
\newtheorem{lemma}[theorem]{Lemma}
\newtheorem{sublemma}{Lemma}[theorem]
\newtheorem{proposition}[theorem]{Proposition}

\newtheorem{corollary}[theorem]{Corollary}
\theoremstyle{definition}
\newtheorem{definition}[theorem]{Definition}
\newtheorem{question}[theorem]{Question}

\newcommand{\crit}{\text{crit}}

\newcommand{\Vopenka}{Vop\v{e}nka}
\newcommand{\VP}{\rm VP}
\newcommand{\gVP}{\rm gVP}

\newcommand{\la}{\langle}
\newcommand{\ra}{\rangle}
\newcommand{\ZFC}{{\rm ZFC}}
\newcommand\GCH{{\rm GCH}}
\newcommand\CC{{\rm CC}}

\newcommand{\lesseq}[1]{{\smallleq}#1}
\newcommand{\smallleq}{\mathrel{\mathchoice{\raise2pt\hbox{$\scriptstyle\leq$}}{\raise1pt\hbox{$\scriptstyle\leq$}}{\raise1pt\hbox{$\scriptscriptstyle\leq$}}{\scriptscriptstyle\leq}}}
\newcommand{\smalllt}{\mathrel{\mathchoice{\raise2pt\hbox{$\scriptstyle<$}}{\raise1pt\hbox{$\scriptstyle<$}}{\raise0pt\hbox{$\scriptscriptstyle<$}}{\scriptscriptstyle<}}}

\newcommand{\p}{\mathbb P}
\newcommand{\q}{\mathbb Q}

\newcommand{\restrict}{\upharpoonright}
\newcommand\of{\subseteq}
\newcommand{\GBC}{{\rm GBC}}
\newcommand{\KM}{{\rm KM}}
\newcommand{\Godel}{G\"odel}
\newcommand{\ORD}{{\rm ORD}}
\newcommand{\tail}{\text{tail}}
\newcommand{\sm}{\text{small}}

\def\<#1>{\left\langle#1\right\rangle}
\newcommand\intersect{\cap}
\newcommand\union{\cup}

\newcommand\Q{\mathbb{Q}}
\newcommand{\image}{\mathbin{\hbox{\tt\char'42}}}
\newcommand{\set}[1]{\{\,{#1}\,\}}

\title[A model of the generic Vop\v enka principle where $\ORD$ is not Mahlo]{A model of the generic Vop\v enka principle in which the ordinals are not Mahlo}
\author{Victoria Gitman}
\address[V. Gitman]{The City University of New York, CUNY Graduate Center, Mathematics Program, 365 Fifth Avenue, New York, NY 10016, USA}
\email{vgitman@nylogic.org}
\urladdr{http://boolesrings.org/victoriagitman}

\author{Joel David Hamkins}
 \address[J.~D.~Hamkins]
         {Mathematics, Philosophy, Computer Science, The Graduate Center of The City University of New York, 365 Fifth Avenue, New York, NY 10016 \& Mathematics, College of Staten Island of CUNY}
\email{jhamkins@gc.cuny.edu}
\urladdr{http://jdh.hamkins.org}
\thanks{The research of the second author has been supported by grant \#69573-00 47 from the CUNY Research Foundation. Commentary concerning this paper can be made at http://jdh.hamkins.org/generic-vopenka-ord-not-mahlo.}

\begin{document}

\maketitle

\begin{abstract}
The generic \Vopenka\ principle, we prove, is relatively consistent with the ordinals being non-Mahlo. Similarly, the generic \Vopenka\ scheme is relatively consistent with the ordinals being definably non-Mahlo. Indeed, the generic \Vopenka\ scheme is relatively consistent with the existence of a $\Delta_2$-definable class containing no regular cardinals. In such a model, there can be no $\Sigma_2$-reflecting cardinals and hence also no remarkable cardinals. This latter fact answers negatively a question of Bagaria, Gitman and Schindler.
\end{abstract}

\section{Introduction}

The \emph{\Vopenka\ principle} is the assertion that for every proper class of first-order structures in a fixed language, one of the structures embeds elementarily into another. This principle can be formalized as a single second-order statement in \Godel-Bernays set-theory $\GBC$, and it has a variety of useful equivalent characterizations. For example, the \Vopenka\ principle holds precisely when for every class $A$, the universe has an $A$-extendible cardinal, and it is also equivalent to the assertion that for every class $A$, there is a stationary proper class of $A$-extendible cardinals (see~\cite[theorem 6]{Hamkins:The-Vopenka-principle-is-inequivalent-to-but-conservative-over-the-Vopenka-scheme}). In particular, the \Vopenka\ principle implies that $\ORD$ is Mahlo: every class club contains a regular cardinal and indeed, an extendible cardinal and more.

To define these terms, recall that a cardinal $\kappa$ is \emph{extendible}, if for every $\lambda>\kappa$, there is an ordinal $\theta$ and an elementary embedding $j:V_\lambda\to V_\theta$ with critical point $\kappa$. It turns out that, in light of the Kunen inconsistency, this weak form of extendibility is equivalent to a stronger form, where one insists also that $\lambda<j(\kappa)$; but there is a subtle issue about this that will come up later in our treatment of the virtual forms of these axioms, where the virtual weak and virtual strong forms are no longer equivalent. Relativizing to a class parameter, a cardinal $\kappa$ is \emph{$A$-extendible} for a class $A$, if for every $\lambda>\kappa$, there is an elementary embedding
 $$j:\la V_\lambda, \in, A\cap V_\lambda\ra\to \la V_\theta,\in,A\cap V_\theta\ra$$
with critical point $\kappa$, and again one may equivalently insist also that $\lambda<j(\kappa)$; see \cite[definition~6.7]{SolovayReinhardtKanamori1978:Strong-axioms-of-infinity-and-elementary-embeddings}. Every such $A$-extendible cardinal is therefore extendible and hence inaccessible, measurable, supercompact and more. These are amongst the largest large cardinals.

In the first-order \ZFC\ context, set theorists commonly consider a first-order version of the \Vopenka\ principle, which we call the \emph{\Vopenka\ scheme}, the scheme making the \Vopenka\ assertion of each definable class separately, allowing parameters.\footnote{Henceforth in this article, when we say `definable class,' we shall always mean that parameters are allowed, unless otherwise specifically mentioned.} That is, the \Vopenka\ scheme asserts, of every formula $\varphi$, that for any parameter $p$, if $\{\,x\mid \varphi(x,p)\,\}$ is a proper class of first-order structures in a common language, then one of those structures elementarily embeds into another.

The \Vopenka\ scheme is naturally stratified by the assertions $\VP(\Sigma_n)$, for the particular natural numbers $n$ in the meta-theory, where $\VP(\Sigma_n)$ makes the \Vopenka\ assertion for all $\Sigma_n$-definable classes. Using the definable $\Sigma_n$-truth predicate, each assertion $\VP(\Sigma_n)$ can be expressed as a single first-order statement in the language of set theory.

Hamkins~\cite{Hamkins:The-Vopenka-principle-is-inequivalent-to-but-conservative-over-the-Vopenka-scheme} proved that the \Vopenka\ principle is not provably equivalent to the \Vopenka\ scheme, if consistent, although they are equiconsistent over \GBC\ and furthermore, the \Vopenka\ principle is conservative over the \Vopenka\ scheme for first-order assertions. That is, over \GBC\ the two versions of the \Vopenka\ principle have exactly the same consequences in the first-order language of set theory.

In this article, we are concerned with the virtual forms of the \Vopenka\ principles. The main idea of virtualization, due Schindler, is to weaken statements asserting the existence of elementary embeddings between some \emph{set-sized} first-order structures to the assertion that such embeddings can be found in a forcing extension of the universe.  Schindler's remarkable cardinals, for example, instantiate the virtualized form of supercompactness via the Magidor characterization of supercompactness \cite{Schindler:RemarkableCardinals}. This virtualization program has now been undertaken with various large cardinals, leading to fruitful new insights (see~\cite{GitmanSchindler:virtualCardinals,BagariaGitmanSchindler:VopenkaPrinciple}). The notion of virtual large cardinals differs significantly from the conceptually related notion of generic large cardinals that has a much longer history (see, for instance, \cite{Foreman:IdealsGenericElementaryEmbeddings}). A generic version of a large cardinal notion asserts that a forcing extension $V[G]$ has embeddings $j:V\to M$ of the type characterizing the original notion, so that the target model $M$ is (in all interesting cases) not contained in $V$. Generic large cardinals generally have consistency strength in the neighborhood of their actual counterparts, but can themselves be very small, as for example $\omega_1$. In contrast, the embeddings witnessing virtual large cardinals are between set-sized structures from $V$. The virtual large cardinals are actual large cardinals, usually at least weakly compact, but compatible with $V=L$, and consequently much weaker than their actual counterparts \cite{GitmanSchindler:virtualCardinals}.

Carrying out the virtualization idea with the \Vopenka\ principles, we define the \emph{generic \Vopenka\ principle} to be the second-order assertion in \GBC\ that for every proper class of first-order structures in a common language, one of the structures admits, in some forcing extension of the universe, an elementary embedding into another. That is, the structures themselves are in the class in the ground model, but you may have to go to the forcing extension in order to find the elementary embedding.

Similarly, the \emph{generic \Vopenka\ scheme}, introduced in~\cite{BagariaGitmanSchindler:VopenkaPrinciple}, is the assertion (in \ZFC\ or \GBC) that for every first-order definable proper class of first-order structures in a common language, one of the structures admits, in some forcing extension, an elementary embedding into another.

On the basis of their work in~\cite{BagariaGitmanSchindler:VopenkaPrinciple}, Bagaria, Gitman and Schindler had asked the following question:

\begin{question}\label{Question:gVS-implies-remarkables?}
  If the generic \Vopenka\ scheme holds, then must there be a proper class of remarkable cardinals?
\end{question}

There seemed good reason to expect an affirmative answer, even assuming only $\gVP(\Sigma_2)$, based on strong analogies with the non-generic case. Specifically, in the non-generic context Bagaria had proved that $\VP(\Sigma_2)$ was equivalent to the existence of a proper class of supercompact cardinals, while in the virtual context, Bagaria, Gitman and Schindler proved that the generic form $\gVP(\Sigma_2)$ was equiconsistent with a proper class of remarkable cardinals, the virtual form of supercompactness. Similarly, higher up, in the non-generic context Bagaria had proved that $\VP(\Sigma_{n+2})$ is equivalent to the existence of a proper class of $C^{(n)}$-extendible cardinals, while in the virtual context, Bagaria, Gitman and Schindler proved that the generic form $\gVP(\Sigma_{n+2})$ is equiconsistent with a proper class of virtually $C^{(n)}$-extendible cardinals.

But further, they achieved direct implications, with an interesting bifurcation feature that specifically suggested an affirmative answer to question \ref{Question:gVS-implies-remarkables?}. Namely, what they showed at the $\Sigma_2$-level is that if there is a proper class of remarkable cardinals, then $\gVP(\Sigma_2)$ holds, and conversely if $\gVP(\Sigma_2)$ holds, then there is either a proper class of remarkable cardinals or a proper class of virtually rank-into-rank cardinals. And similarly, higher up, if there is a proper class of virtually $C^{(n)}$-extendible cardinals, then $\gVP(\Sigma_{n+2})$ holds, and conversely, if $\gVP(\Sigma_{n+2})$ holds, then either there is a proper class of virtually $C^{(n)}$-extendible cardinals or there is a proper class of virtually rank-into-rank cardinals. So in each case, the converse direction achieves a disjunction with the target cardinal and the virtually rank-into-rank cardinals. But since the consistency strength of the virtually rank-into-rank cardinals is strictly stronger than the generic \Vopenka\ principle itself, one can conclude on consistency-strength grounds that it isn't always relevant, and for this reason, it seemed natural to inquire whether this second possibility in the bifurcation could simply be removed. That is, it seemed natural to expect an affirmative answer to question~\ref{Question:gVS-implies-remarkables?}, even assuming only $\gVP(\Sigma_2)$, since such an answer would resolve the bifurcation issue and make a tighter analogy with the corresponding results in the non-generic/non-virtual case.

In this article, however, we shall answer the question negatively. The details of our argument seem to suggest that a robust analogy with the non-generic/non-virtual principles is achieved not with the virtual $C^{(n)}$-cardinals, but with a weakening of that property that drops the requirement that $\lambda<j(\kappa)$, as explained in theorem~\ref{th:Equivalencies-for-gVS}. Indeed, theorem~\ref{th:Equivalencies-for-gVS} seems to offer an illuminating resolution of the bifurcation aspect of the results we mentioned from~\cite{BagariaGitmanSchindler:VopenkaPrinciple}, because it provides outright virtual large-cardinal equivalents of the stratified generic \Vopenka\ principles. Because the resulting virtual large cardinals are not necessarily remarkable, however, our main theorem shows that it is relatively consistent with even the full generic \Vopenka\ principle that there are no $\Sigma_2$-reflecting cardinals and therefore no remarkable cardinals.

\begin{maintheorem*}\
\begin{enumerate}
  \item It is relatively consistent that \GBC\ and the generic \Vopenka\ principle holds, yet $\ORD$ is not Mahlo.
  \item It is relatively consistent that \ZFC\ and the generic \Vopenka\ scheme holds, yet $\ORD$ is not definably Mahlo, and not even $\Delta_2$-Mahlo. In such a model, there can be no $\Sigma_2$-reflecting cardinals and therefore also no remarkable cardinals.
\end{enumerate}
\end{maintheorem*}

These theorems are proved as theorems~\ref{th:ORDnotMahlo} and~\ref{th:DefinableORDnotMahlo}. The theorems are proved under the assumption that $0^\sharp$ exists, although this assumption can be weakened.

\section{Virtual embeddings}\label{sec:virtualEmbeddings}

In our main result, we shall make use of some absoluteness properties concerning virtual embeddings, and so let us review those ideas now. The following folklore results, which have appeared in a number of articles involving virtual large cardinals (possibly earliest in \cite{schindler:remarkable2}), are central.

\begin{lemma}[Absoluteness lemma]\label{lem:absolutenessLemma}
Suppose that $M$ is a countable first-order structure and $j:M\to N$ is an elementary embedding. If $W$ is a transitive (set or class) model of (some sufficiently large fragment of) $\ZFC$ such that $M$ is countable in $W$ and $N\in W$, then for any finite subset of $M$, the model $W$ has an elementary embedding $j^*:M\to N$, which agrees with $j$ on that subset. Moreover, if both $M$ and $N$ are transitive $\in$-structures and $j$ has a critical point, we can additionally assume that $\crit(j^*)=\crit(j)$.
\end{lemma}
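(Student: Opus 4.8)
The plan is to recast the existence of the embedding as the existence of an infinite branch through a certain tree, and then to appeal to the absoluteness of well-foundedness. First I would fix, working in $W$, an enumeration $M=\set{a_n : n<\omega}$, which is available precisely because $M$ is countable in $W$. Using this enumeration I would build the tree $T$ whose nodes at level $n$ are the finite partial maps $p\colon\set{a_0,\dots,a_{n-1}}\to N$ that ``look like'' the restriction of an elementary embedding, in the sense that for every formula $\varphi$ with \Godel\ number below $n$ and every tuple $\bar a$ from the domain, $M\models\varphi(\bar a)$ if and only if $N\models\varphi(p(\bar a))$, ordered by extension. With the bookkeeping arranged so that each formula and each tuple is eventually taken into account, an infinite branch through $T$ is exactly an elementary embedding $M\to N$; conversely the given $j$, restricted to initial segments of the enumeration, furnishes such a branch, so $T$ is ill-founded in $V$.

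The crucial point is that $T$ lies in $W$ and is computed there exactly as in $V$: it is defined from $M$, $N$, the enumeration, and the first-order satisfaction relations of the set structures $M$ and $N$, all of which are elements of $W$ (since $N\in W$, and $M$ together with its enumeration is in $W$) and all of which are absolute between the transitive models $W$ and $V$. Now I would invoke the absoluteness of well-foundedness: since $W$ models a sufficiently large fragment of $\ZFC$, any tree it regards as well-founded carries a rank function in $W$, which remains a genuine rank function in $V$; contrapositively, as $T$ is ill-founded in $V$ it must be ill-founded in $W$ as well. An infinite branch through $T$ inside $W$ is then the desired embedding $j^*\colon M\to N$ with $j^*\in W$.

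To secure agreement with $j$ on a prescribed finite subset of $M$, I would simply list the relevant elements first in the enumeration and prune $T$ to a fixed stem sending each to its $j$-image; these images lie in $N\of W$, so the pruned tree is still in $W$, and $j$ itself still traces a branch through it, so the argument goes through unchanged. For the statement about critical points, with $M,N$ transitive and $\kappa=\crit(j)$, I would instead impose the \emph{local} constraints that each node fix every ordinal below $\kappa$ lying in its domain and send $\kappa$ to $j(\kappa)$; since $\kappa\in M\of W$ and $j(\kappa)\in N\of W$ these constraints are expressible in $W$, each is checkable at a finite node, and $j$ continues to witness ill-foundedness, so any branch in $W$ yields a $j^*$ fixing everything below $\kappa$ and moving $\kappa$, that is, with $\crit(j^*)=\kappa$. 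I expect the main obstacle to lie in the setup rather than in the absoluteness: one must arrange the bookkeeping so that a branch really is fully elementary, not merely quantifier-free preserving, and one must verify that the critical-point requirement can be enforced by finitely checkable local conditions rather than as a single global demand about an initial segment of the ordinals.
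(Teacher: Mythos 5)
Your proposal is correct and is precisely the ``elementary tree-of-attempts argument'' that the paper alludes to (deferring details to the Gitman--Schindler reference): the tree of finite partial elementary maps respecting the enumeration, ill-founded in $V$ because $j$ traces a branch, hence ill-founded in $W$ by absoluteness of well-foundedness, with the finite-agreement and critical-point clauses enforced by fixing a stem and by local constraints on nodes. No gaps; this matches the intended proof.
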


\noindent The proof is an elementary tree-of-attempts argument and can be found in~\cite{GitmanSchindler:virtualCardinals}. As a consequence, we can say a little something more about which kind of forcing extensions one needs to look in to find the embeddings: if there is an elementary embedding $j:M\to N$ in some forcing extension, then there is one in any forcing extension in which $M$ has become countable.

\begin{lemma}
If $M$ and $N$ are first-order structures in a common language and there is an elementary embedding $j:M\to N$ in some set-forcing extension, then there is such an embedding $j^*:M\to N$ in any forcing extension in which $M$ has become countable. Further, one can arrange that $j^*$ agrees with $j$ on any prescribed finite set of values and that, if appropriate, $j$ and $j^*$ have the same critical point.
\end{lemma}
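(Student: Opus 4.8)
The plan is to derive this as a direct consequence of the absoluteness lemma~\ref{lem:absolutenessLemma}, the only real work being to arrange a single universe that simultaneously sees an embedding $M\to N$ and contains the target extension as an inner model. Let $\p$ be the forcing and $G\of\p$ the $V$-generic filter for which $j\colon M\to N$ lies in $V[G]$. Since $j$ witnesses the sentence $\sigma$ asserting ``there is an elementary embedding from $\check M$ to $\check N$,'' some condition $p\in G$ has $p\forces_\p\sigma$. Now suppose $V[H]$ is any set-forcing extension, by a poset $\q$, in which $M$ has become countable, and fix a condition $q\in H$ forcing $\check M$ to be countable.

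Next I would move to a common extension. Working over $V[H]$, let $G'\of\p$ be $V[H]$-generic with $p\in G'$. By the product lemma, $G'$ is then also $V$-generic for $\p$ while $H$ is $V[G']$-generic for $\q$, and the two extensions commute: $V[H][G']=V[G'][H]$. In particular $V[G']$ is a genuine $V$-generic extension containing $p$, so $V[G']\models\sigma$ and there is an elementary embedding $M\to N$ in $V[G']\of V[H][G']$. This is the step where one must take care: the embedding we started with lived only in $V[G]$, not in $V[H]$, and it is precisely the mutual genericity of $G'$ and $H$ that lets us recover an embedding in a universe extending the given $V[H]$ without appealing to any unwarranted transfer of the forcing relation from $V$ to $V[H]$.

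Finally I would apply the absoluteness lemma~\ref{lem:absolutenessLemma} inside the universe $V[H][G']$, taking $W=V[H]$. Here $M$ is countable in $W$ (by our choice of $q\in H$), we have $N\in V\of W$, and $W$ is a transitive model of $\ZFC$ sitting inside $V[H][G']$; and we have just produced an elementary embedding $M\to N$ in $V[H][G']$. Lemma~\ref{lem:absolutenessLemma} therefore yields an elementary embedding $j^*\colon M\to N$ lying in $W=V[H]$ itself, and its additional clauses allow us to demand that $j^*$ agree with the chosen embedding on any prescribed finite subset of $M$ and, when $M$ and $N$ are transitive $\in$-structures and the embedding has a critical point, that $\crit(j^*)$ equal that critical point. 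Since $V[H]$ was an arbitrary extension collapsing $M$ to be countable, this is exactly the claim. The point worth underscoring is that although the argument passes through the auxiliary extension $V[H][G']$, the embedding $j^*$ delivered by the absoluteness lemma genuinely resides in $V[H]$, so the conclusion is a statement about $V[H]$ alone and no further reflection is needed.
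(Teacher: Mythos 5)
Your argument takes essentially the same route as the paper's: pass to a common forcing extension of a model containing an elementary embedding $M\to N$ and the given model $V[H]$ in which $M$ is countable, and then apply the absoluteness lemma with $W=V[H]$. Your use of mutual genericity (forcing with $\p$ over $V[H]$ below $p$, so that $V[G']\models\sigma$ and $V[H][G']=V[G'][H]$) is in fact a more careful rendering of the step that the paper's one-line proof elides when it simply takes $W$ to be ``the desired extension'' inside $V[G][H]$.

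There is, however, one genuine (though easily repaired) gap, in the ``further'' clause. The lemma asks for $j^*$ to agree with the \emph{original} embedding $j\in V[G]$ on a prescribed finite set and, when appropriate, to share its critical point. But the embedding you hand to the absoluteness lemma lives in $V[G']$ and was obtained only because $p\forces\sigma$, where $\sigma$ asserts the bare existence of \emph{some} elementary embedding $\check M\to\check N$; nothing ties that embedding to $j$, so the $j^*$ you extract agrees with \emph{it} rather than with $j$. The repair: for a prescribed finite set $a_1,\dots,a_k$ of elements of $M$, the values $b_i=j(a_i)$ lie in $N\in V$, and likewise $\kappa=\crit(j)$ is in $V$ when $M$ and $N$ are transitive $\in$-structures; so $j$ itself witnesses in $V[G]$ the stronger sentence $\sigma'$ asserting ``there is an elementary embedding $h:\check M\to\check N$ with $h(\check a_i)=\check b_i$ for each $i$ and $\crit(h)=\check\kappa$,'' and hence some $p'\in G$ forces $\sigma'$. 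Running your argument below $p'$ instead of $p$ supplies $V[H][G']$ with an embedding already agreeing with $j$ on the relevant data, and the additional clauses of the absoluteness lemma then carry that agreement down to $j^*\in V[H]$.
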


To prove the lemma, note that if a forcing extension $V[G]$ has an elementary embedding $j:M\to N$, then we can go to a further extension $V[G][H]$ by any forcing collapsing $M$ to become countable, and apply the absoluteness lemma in that extension, with the class $W$ being the desired extension of $V$ in which $M$ has become countable. So there is such a $j^*$ in that extension with the required similarities to $j$.

There is also an interesting game-theoretic characterization of when virtual embeddings exist, which makes no reference to forcing. Specifically, given first-order structures $M$ and $N$ in a common language, we define the associated two-player game $G(M,N)$ in which player I plays elements from $M$ and player II plays corresponding elements from $N$. Player II wins if at every stage of play, the moves constitute a finite partial isomorphism of $M$ to $N$. In other words, the type of the first $n$ moves of player I in $M$ is equal to the type of the first $n$ moves of player II in $N$. This game $G(M,N)$ is closed for player II and therefore determined by the Gale-Stewart theorem. So one of the players has a winning strategy. The characterization is provided by the following lemma.

\begin{lemma}[\cite{BagariaGitmanSchindler:VopenkaPrinciple}]
Suppose $M$ and $N$ are first-order structures. The following are equivalent.
\begin{enumerate}
\item In some set-forcing extension there is an elementary embedding $j:M\to N$.
\item Player II has a winning strategy in the game $G(M,N)$.
\end{enumerate}
\end{lemma}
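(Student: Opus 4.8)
The plan is to prove both directions by tying a winning strategy directly to the copying of moves through an embedding, relegating all of the forcing to two absoluteness observations. The key point throughout is that the winning condition of $G(M,N)$ is local for player II: since the game is closed for II, a strategy for II is winning if and only if every \emph{finite} position reached by following it is a partial isomorphism, and this is a first-order assertion about the fixed sets $M$, $N$ and the strategy that quantifies only over finite sequences of elements of $M$ and over formulas. Because forcing adds no new finite sequences of ground-model elements and does not alter the satisfaction relations of the fixed structures $M$ and $N$, any such assertion is absolute between $V$ and any set-forcing extension.

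For the direction $(2)\Rightarrow(1)$, suppose $\tau\in V$ is a winning strategy for player II. I would pass to a forcing extension $V[G]$ collapsing $|M|$ to be countable and there fix an enumeration $M=\set{m_i:i<\omega}$. By the absoluteness remark above, $\tau$ remains winning for II in $V[G]$, so the run in which player I plays $m_0,m_1,\dots$ and II answers by $\tau$ with $n_0,n_1,\dots$ keeps every finite initial segment a partial isomorphism; that is, the full type of $(m_0,\dots,m_{k-1})$ in $M$ equals that of $(n_0,\dots,n_{k-1})$ in $N$ for every $k$. Since the $m_i$ exhaust $M$, the map $j(m_i)=n_i$ is then a well-defined elementary embedding $j:M\to N$ in $V[G]$, as required.

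For the direction $(1)\Rightarrow(2)$, suppose there is an elementary embedding $j:M\to N$ in some extension $V[G]$. Since $G(M,N)$ is closed for II it is determined in $V$, so it suffices to rule out a winning strategy for player I. Assume for contradiction that $\sigma\in V$ is winning for I, and let $T\in V$ be the tree of positions consistent with $\sigma$ in which II has not yet lost, i.e.\ the finite partial plays that are partial isomorphisms. That $\sigma$ is winning for I means precisely that $T$ is well-founded in $V$. Well-foundedness of a set tree is absolute to $V[G]$, since its ground-model rank function still witnesses it, so $T$ remains well-founded in $V[G]$. But in $V[G]$ player I can be defeated using $j$: let I follow $\sigma$ while II answers each move $m$ of I by $j(m)$. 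The moves of I are computed by $\sigma$ from a history of ground-model elements and so are the same as in $V$, and by elementarity of $j$ every finite position of this run is a partial isomorphism. This run is thus an infinite branch through $T$ in $V[G]$, contradicting well-foundedness. Hence I has no winning strategy, and by determinacy II does, establishing $(2)$.

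The main obstacle is transferring the winning property across the forcing boundary in each direction, and the interesting feature is that it is handled by two different absoluteness facts. For player II the winning condition localizes to finite positions and is plainly forcing-absolute; for player I the winning condition genuinely concerns infinite runs, so instead one encodes it as well-foundedness of the tree of surviving positions and invokes the absoluteness of well-foundedness. Once these two points are isolated, the remaining constructions—reading $j$ off of a $\tau$-run against an enumeration of $M$, and copying through $j$ to produce an infinite branch—are routine.
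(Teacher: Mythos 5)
Your proof is correct and follows essentially the same route as the paper's: the paper transfers the winner of this closed game between $V$ and its forcing extensions via the absoluteness of the ordinal game values on the game tree, which is exactly what your well-foundedness-of-the-survival-tree argument implements, and both arguments produce player II's win in the extension by copying player I's moves through $j$. Your $(2)\Rightarrow(1)$ direction---reading an elementary embedding off a $\tau$-run against an enumeration of a collapsed $M$, using that II's winning condition localizes to finite positions---is the standard argument that the paper leaves implicit.
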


One can prove this lemma simply by observing that the winner of any open game is absolute, since the recursive definition of the ordinal game values for the various positions in the game tree is defined identically in the two models. Since player II can clearly win in the forcing extension, where there is an actual elementary embedding, it follows that she must also have a winning strategy in the ground model.

Let us illustrate the easy power of these absoluteness results with the following application.

\begin{proposition}\label{ob:zero-sharp-implies-gVP-in-L} Assume $0^\sharp$ exists. Then:
 \begin{enumerate}
   \item The constructible universe $L$, equipped with only its definable classes, is a model of the generic \Vopenka\ principle.
   \item In $L$ there are numerous virtual rank-into-rank embeddings $j:V_\theta^L\to V_\theta^L$, where $\theta$ is far above the supremum of the critical sequence.
 \end{enumerate}
\end{proposition}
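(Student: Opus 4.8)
The plan is to exploit the elementary embeddings $j\colon L\to L$ supplied by $0^\sharp$. Recall that $0^\sharp$ provides a closed unbounded class $I=\{i_\alpha\}$ of Silver indiscernibles for $L$, and that any order-preserving map $e\colon I\to I$ induces a unique elementary embedding $j_e\colon L\to L$, determined by $j_e(t^L(i_{\alpha_1},\dots,i_{\alpha_n}))=t^L(e(i_{\alpha_1}),\dots,e(i_{\alpha_n}))$ for Skolem terms $t$, whose critical point is the least indiscernible moved by $e$. The essential mechanism is this: if $M$ is any first-order structure in $L$ and $j\colon L\to L$ is elementary, then $j\restrict M\colon M\to j(M)$ is an elementary embedding of structures, since satisfaction in $M$ is definable in $L$ and so $j$ carries ``$M\models\psi[\vec a]$'' to ``$j(M)\models\psi[j(\vec a)]$''. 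Such an embedding lives in $V$, not in $L$; but this suffices, because player II wins the closed game $G(M,j(M))$ in $V$ (using the $V$-embedding to move), the winner of this game is computed identically in $L$, and hence by the game lemma there is an elementary embedding $M\to j(M)$ in some set-forcing extension of $L$.

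For part (1), let $\mathcal C=\{\,x\mid\varphi(x,p)\,\}$ be a definable proper class of first-order structures in a common language, with parameter $p\in L$. Choose $M\in\mathcal C$ of sufficiently large rank and let $\kappa$ be the largest indiscernible with $\kappa\le\mathrm{rank}(M)$, arranging also $\kappa>\mathrm{rank}(p)$; let $j\colon L\to L$ be the indiscernible-induced embedding with critical point $\kappa$ that sends $\kappa$ to the next indiscernible, so that $\kappa\le\mathrm{rank}(M)<j(\kappa)$ and therefore $\mathrm{rank}(j(M))=j(\mathrm{rank}(M))\ge j(\kappa)>\mathrm{rank}(M)$, giving $j(M)\neq M$. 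Since $\crit(j)=\kappa>\mathrm{rank}(p)$ we have $j(p)=p$, so from $L\models\varphi(M,p)$ we obtain $L\models\varphi(j(M),p)$, i.e.\ $j(M)\in\mathcal C$. Thus $j\restrict M\colon M\to j(M)$ is an elementary embedding in $V$ between two distinct members of $\mathcal C$, and by the mechanism above there is such an embedding in a forcing extension of $L$. As $\mathcal C$ was an arbitrary definable proper class, $L$ with its definable classes satisfies the generic \Vopenka\ principle.

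For part (2), take $j=j_e\colon L\to L$ where $e$ shifts the indiscernibles below $i_\omega$ up by one ($e(i_\alpha)=i_{\alpha+1}$ for $\alpha<\omega$) and fixes every indiscernible $\ge i_\omega$. Then $\crit(j)=i_0=:\kappa$, the critical sequence is $\langle i_n\mid n<\omega\rangle$ with supremum $\lambda=i_\omega$, and $j$ fixes $\lambda$ together with every larger indiscernible; in particular $j$ fixes $\theta:=i_{\omega+\omega}$, which lies far above $\lambda$, with a whole block of indiscernibles in between. Since $j(\theta)=\theta$, the restriction $j\restrict V^L_\theta\colon V^L_\theta\to V^L_\theta$ is an elementary self-embedding in $V$, a rank-into-rank embedding of $V^L_\theta$ whose critical sequence is bounded by $\lambda\ll\theta$. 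Now apply the absoluteness lemma in a forcing extension of $L$ collapsing $V^L_\theta$ to be countable, prescribing that the resulting $j^*\colon V^L_\theta\to V^L_\theta$ agree with $j$ on $\lambda$ and have the same critical point $\kappa$. This yields a virtual elementary embedding $j^*$ with $\crit(j^*)=\kappa$ and $j^*(\lambda)=\lambda$; by order-preservation $(j^*)^n(\kappa)<\lambda$ for every $n$, so $\sup_n(j^*)^n(\kappa)\le\lambda<\theta$ with $\theta$ far above. Letting the pair $(\kappa,\theta)$ range over a proper class of suitable indiscernibles produces the promised numerous virtual rank-into-rank embeddings in $L$.

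The conceptual crux throughout is the absoluteness of virtual embeddings---equivalently, the absoluteness of the winner of the closed game $G(M,N)$---which lets us manufacture the embeddings cheaply in $V$ from $0^\sharp$ and then transport their virtual shadows down into $L$. The main technical obstacle is the one flagged in the introduction: ensuring that $\theta$ genuinely lies far above the supremum of the critical sequence of the virtual embedding, rather than having that supremum creep up to $\theta$. I expect to handle this exactly as above, by using the finite-agreement clause of the absoluteness lemma to pin a fixed point $\lambda$ strictly between $\kappa$ and $\theta$; because an elementary embedding preserves order, fixing $\lambda$ automatically traps the entire critical sequence below it.
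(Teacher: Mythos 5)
Your proposal is correct and follows essentially the same route as the paper: use $0^\sharp$ to produce indiscernibility embeddings $j\colon L\to L$ fixing the relevant parameters, restrict them to set-sized structures, and transfer the resulting embeddings into a forcing extension of $L$ via the absoluteness/game lemmas, with a fixed indiscernible $\lambda$ trapping the critical sequence below $\theta$ in part (2). The only cosmetic difference is that you verify the \Vopenka\ instance for a given definable class of structures directly, whereas the paper first notes that every Silver indiscernible is virtually $A$-extendible in $L$ and then invokes the large-cardinal characterization.
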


\begin{proof}
For statement (1), consider any class $A$ that is definable (from parameters) in $L$. Let $j:L\to L$ be an indiscernibility embedding with critical point $\kappa$ above rank of the parameters used to define $A$, which must then take $A$ to $A$. For any $\theta$ above $\kappa$ that is closed under $j$, it follows that $j\restrict L_\theta:\<L_\theta,\in,A\intersect L_\theta>\to\<L_\theta,\in,A\intersect L_\theta>$ is an elementary embedding with critical point $\kappa$. It follows by the previous results that there is a virtual embedding like that in a forcing extension of $L$, and so $\kappa$ is virtually $A$-extendible in $L$. Thus, the generic \Vopenka\ principle holds there.

For statement (2), similarly, let $j:L\to L$ shift a sequence of Silver indiscernibles down low and fix all other Silver indiscernibles. If $\lambda<\theta$ are Silver indiscernibles above the critical sequence, then $j\restrict L_\theta:L_\theta\to L_\theta$ is an elementary embedding in $V$ with fixed point $\lambda$ above the critical sequence. By the absoluteness lemma, therefore, there must be such embeddings in a forcing extension of $L$.
\end{proof}

The proof shows that every Silver indiscernible is virtually $A$-extendible in $L$ for every definable class $A$, and furthermore, is the critical point of virtual rank-into-rank embeddings with targets as high as desired and fixed points as high above the critical sequence as desired.

One doesn't need the full strength of $0^\sharp$, however, to get a model of the generic \Vopenka\ principle or to get virtual rank-into-rank embeddings, and the argument above shows that $0^\sharp$ has a strictly higher consistency strength than a virtual rank-into-rank cardinal, since one can simply chop off the universe at any Silver indiscernible and reflect these assertions, gaining a transitive model of the latter theories. For example, if $\kappa$ is virtually rank-into-rank in $L$, and $\theta>\kappa$ is a Silver indiscernible, then $L_\theta$ is a transitive model of $\ZFC$ with a virtually rank-into-rank cardinal. So the consistency strength of $0^\sharp$ is strictly stronger than necessary.

\section{Large cardinal characterizations of the generic \Vopenka\ principle and scheme}\label{sec:LC-characterizations}

Although our main theorem will use only the direct definition of the generic \Vopenka\ principle, let us sketch a richer background context for this principle by providing a large cardinal characterization of it. When working with the second-order generic \Vopenka\ principle our background theory is assumed to be $\GBC$.

\begin{definition}\rm
  A cardinal $\kappa$ is (weakly) \emph{virtually $A$-extendible}, for a class $A$, if for every ordinal $\lambda>\kappa$ there is an ordinal $\theta$ such that in a set-forcing extension, there is an elementary embedding
   $$j:\<V_\lambda,\in,A\intersect V_\lambda>\to\<V_\theta,\in,A\intersect V_\theta>,$$
  with critical point $\kappa$.
\end{definition}

In contrast, we define that $\kappa$ is (strongly) \emph{virtually $A$-extendible}, if we may also insist that $\lambda<j(\kappa)$ in the embedding mentioned above. Although in the non-virtual context mentioned in the introduction of this article, the weak and strong forms of $A$-extendibility coincide, nevertheless it turns out that as a consequence of the main theorem, the weak and strong forms of \emph{virtual} $A$-extendibility are not the same. Furthermore, our main theorem shows that it turns out to be the weak form that is relevant for the generic \Vopenka\ principle, and consequently, our article is concerned principally with the weak form of virtual $A$-extendibility.

\goodbreak
\begin{theorem}\label{th:Equivalencies-for-gVP}
The generic \Vopenka\ principle holds if and only if for every class $A$, there are a proper class of (weakly) virtually $A$-extendible cardinals.
\end{theorem}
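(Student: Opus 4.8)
The plan is to prove both implications, in each case transferring information between the proper class of structures and the virtual embeddings that witness extendibility. For the backward direction, assume that for every class $A$ there is a proper class of (weakly) virtually $A$-extendible cardinals, and let $\mathcal C$ be an arbitrary proper class of first-order structures in a common language. Using global choice I would enumerate $\mathcal C=\la M_\alpha\mid\alpha\in\ORD\ra$ as an injective class-length sequence and let $A$ code the graph $\alpha\mapsto M_\alpha$ (for instance $A=\set{(\alpha,x)\mid x\in M_\alpha}$ together with a marker for the pairing), so that from $\la V_\lambda,\in,A\intersect V_\lambda\ra$ one recovers $M_\alpha$ from $\alpha$ whenever the pair $(\alpha,M_\alpha)$ lies in $V_\lambda$. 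Fix a (weakly) virtually $A$-extendible cardinal $\kappa$, and choose $\lambda>\kappa$ large enough that $(\kappa,M_\kappa)$, and hence the whole initial segment $\la M_\beta\mid\beta\le\kappa\ra$, lies in $V_\lambda$. Weak virtual $A$-extendibility then supplies, in some set-forcing extension $V[G]$, an elementary embedding $j\colon\la V_\lambda,\in,A\intersect V_\lambda\ra\to\la V_\theta,\in,A\intersect V_\theta\ra$ with $\crit(j)=\kappa$. Since $j$ preserves the coded enumeration, $j(M_\kappa)=M_{j(\kappa)}$, and the standard fact that an $\in$-elementary embedding restricts to an elementary embedding of any coded structure gives $j\restriction M_\kappa\colon M_\kappa\to M_{j(\kappa)}$ elementary in $V[G]$. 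As $j(\kappa)>\kappa$ and the enumeration is injective, $M_{j(\kappa)}\ne M_\kappa$, so one member of $\mathcal C$ virtually embeds into another, which is exactly the generic \Vopenka\ principle for $\mathcal C$.

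For the forward direction I would argue by contradiction. Suppose the generic \Vopenka\ principle holds but, for some class $A$, the (weakly) virtually $A$-extendible cardinals are bounded by an ordinal $\eta$. For $\alpha>\eta$, say $\lambda$ \emph{witnesses failure at $\alpha$} if there is no $\theta$ and no virtual elementary embedding $\la V_\lambda,\in,A\intersect V_\lambda\ra\to\la V_\theta,\in,A\intersect V_\theta\ra$ with critical point $\alpha$. A short argument shows these witnesses form an end-segment of the ordinals: a virtual target for some $\lambda_1$ restricts, via the image of $V_{\lambda_0}$ for any $\alpha<\lambda_0<\lambda_1$, to a virtual target for $\lambda_0$, so failure propagates upward. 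This lets me define a \emph{strictly increasing} function $f$ by taking $f(\alpha)$ to be the least limit ordinal that both witnesses failure at $\alpha$ and exceeds $\sup_{\eta<\xi<\alpha}f(\xi)$. I then form the proper class of structures
$$C_\alpha=\la V_{f(\alpha)},\in,A\intersect V_{f(\alpha)},\,\alpha,\,f\restriction(\eta,\alpha),\,(\xi)_{\xi\le\eta}\ra,\qquad\eta<\alpha\in\ORD,$$
in the common language having $\in$, a predicate for $A$, a constant naming the index $\alpha$, a predicate for the coded graph of $f$, and constants $c_\xi$ naming each $\xi\le\eta$. These are distinct (different index constants) and form a proper class, and the language is set-sized, so the generic \Vopenka\ principle applies.

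Applying the principle yields, in some forcing extension, an elementary embedding $j\colon C_\alpha\to C_{\alpha'}$. The constants $c_\xi$ force $\crit(j)>\eta$, while the index constant forces $j(\alpha)=\alpha'$; since embeddings of these $\in$-structures do not decrease ordinals and the structures are distinct, $\alpha<\alpha'$, and $\mu=\crit(j)$ satisfies $\eta<\mu\le\alpha$. Restrict $j$ to the coded substructure with universe $V_{f(\mu)}$, which is an element coded inside $C_\alpha$ when $\mu<\alpha$ (here $f$ increasing gives $f(\mu)<f(\alpha)$, and the coded graph gives $j(f(\mu))=f(j(\mu))=f(\mu')$) and is the whole universe of $C_\alpha$ when $\mu=\alpha$. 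In either case this restriction is an elementary embedding $\la V_{f(\mu)},\in,A\intersect V_{f(\mu)}\ra\to\la V_{f(\mu')},\in,A\intersect V_{f(\mu')}\ra$ with critical point $\mu$ living in the same forcing extension, where $\mu'=j(\mu)$. But this is precisely a virtual target for $f(\mu)$ at $\mu$, contradicting that $f(\mu)$ witnesses failure at $\mu$. Hence no such bound $\eta$ exists, and the (weakly) virtually $A$-extendible cardinals form a proper class.

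The hard part, exactly as in the classical \Vopenka\ arguments, is controlling the critical point of the embedding produced by the principle: a single virtual embedding only records extendibility information for one value of $\lambda$, whereas the definition quantifies over all $\lambda$. The device overcoming this is the combination of (i) naming an initial segment of the ordinals by constants, which pins $\crit(j)$ above $\eta$, and (ii) coding the minimal failure-witness function $f$ into the structures, so that wherever $\mu=\crit(j)$ lands it reflects a genuine failure at the definable height $f(\mu)$, contradicting the minimality built into $f$. I should also verify that virtuality introduces no difficulty: every embedding in play lives in a single forcing extension $V[G]$, restrictions of elementary embeddings remain elementary there, and the absoluteness lemma guarantees that these restricted embeddings genuinely witness (or refute) the forcing-extension existence demanded by virtual $A$-extendibility.
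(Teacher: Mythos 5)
Your proof is correct in substance, but it takes a genuinely different route from the paper's, most visibly in the forward direction. The paper first applies the generic \Vopenka\ principle to the class of ``bad'' structures $\la V_\lambda,\in,A\intersect V_\lambda,\check\alpha\ra_{\alpha\leq\gamma}$ to conclude that for all sufficiently large $\lambda$ there is a virtual embedding with critical point above $\gamma$, and then regularizes the critical point by applying a weak class Fodor lemma to the pressing-down function $\lambda\mapsto\kappa$ before restricting the embeddings downward. You instead adapt the classical Solovay--Reinhardt--Kanamori template: assuming the weakly virtually $A$-extendible cardinals are bounded by $\eta$, you define by class recursion the increasing least-failure-witness function $f$, code $f$, an index constant, and constants for all $\xi\leq\eta$ into the structures $C_\alpha$, and read off a contradiction directly at the critical point $\mu$ of whatever embedding the principle produces. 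This buys a proof with no Fodor-type step at all, at the cost of a class recursion (unproblematic in \GBC) and a heavier language. In the backward direction you enumerate the class and code the graph of the enumeration into $A$, whereas the paper applies $\mathcal{M}$-extendibility directly and takes $M$ of the $\kappa^{th}$ occurring rank; both versions work.

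Two small repairs are needed. First, in the backward direction you must choose $\kappa$ above the rank of the common language, so that $j$ fixes the language pointwise and $j\restrict M_\kappa$ is elementary as a map of structures in that fixed language; since the hypothesis gives a proper class of such $\kappa$, this costs nothing. Second, your predicate ``$\lambda$ witnesses failure at $\alpha$'' is vacuously true for $\lambda\leq\alpha$, so you should build the requirement $f(\alpha)>\alpha$ into the definition of $f$; otherwise the index $\alpha$ and the critical point $\mu$ need not belong to $V_{f(\alpha)}$ and $V_{f(\mu)}$, and the restricted embedding would not have critical point $\mu$. With that adjustment the contradiction at $f(\mu)$ goes through exactly as you describe, inside the single forcing extension where $j$ lives.
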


\begin{proof}
We basically follow the argument of~\cite[theorem 6]{Hamkins:The-Vopenka-principle-is-inequivalent-to-but-conservative-over-the-Vopenka-scheme}, which gives the non-generic/non-virtual analogue of this characterization, except that (i) the embeddings here all live in a forcing extension; (ii) there is no requirement here that $\lambda<j(\kappa)$ for the embeddings and so this argument uses only the weak form of virtual extendibility; and (iii) we do not get here a stationary proper class of virtually $A$-extendible cardinals (and we cannot in light of corollary~\ref{Corollary.GVP-with-weak-but-not-strong}).

For the forward implication, assume that the generic \Vopenka\ principle holds and fix some class $A$ and an ordinal $\gamma$. We will argue that there is a (weakly) virtually $A$-extendible cardinal above $\gamma$. We claim first that for all sufficiently large $\lambda$, there is an ordinal $\theta>\lambda$ and a virtual elementary embedding $j:\<V_\lambda,\in,A\intersect V_\lambda>\to \<V_\theta,\in,A\intersect V_\theta>$ with critical point above $\gamma$. If not, let $\mathcal{M}$ be the proper class of all structures $\<V_\lambda,\in,A\intersect V_\lambda,\check\alpha>_{\alpha\leq\gamma}$, where we have added a constant symbol $\check\alpha$ for every ordinal $\alpha\leq\gamma$, for which there is no $\theta$ with the desired virtual embedding. Adding the constants is equivalent to requiring the critical point above $\gamma$. By the generic \Vopenka\ principle, there is an elementary embedding in some forcing extension between two of these structures $j:\<V_\lambda,\in,A\intersect V_\lambda,\check\alpha>_{\alpha\leq\gamma}\to\<V_\theta,\in,A\intersect V_\theta,\check\alpha>_{\alpha\leq\gamma}$, with $\lambda<\theta$, contrary to the inclusion of the former structure in $\mathcal{M}$, thereby establishing our claim. So we may fix an ordinal $\lambda_0$ such that for all $\lambda\geq\lambda_0$, there is an ordinal $\theta>\lambda$ and a virtual elementary embedding $j:\<V_\lambda,\in,A\intersect V_\lambda>\to\<V_\theta,\in,A\intersect V_\theta>$ with critical point above $\gamma$. For singular $\lambda$, we may assume without loss that $j$ has a critical point below $\lambda$, by considering $j\restrict V_\lambda$ for an embedding $j$ on $V_{\lambda+1}$, which must move $\lambda$, but cannot have $\lambda$ as its critical point. So we have a critical point $\kappa$ above $\gamma$ and less than $\lambda$, although different $\lambda$ could have different such critical points. Nevertheless, the map $\lambda\mapsto\kappa$, choosing the smallest such $\kappa$ that can be forced to be the critical point of such an embedding, is a definable pressing-down function. It follows that there is an unbounded class of $\lambda$ all giving rise to the same cardinal $\kappa$.\footnote{This weak version of the class Fodor's lemma, where one wants merely that the function is constant on an unbounded class, is easily provable in \GBC. The full class Fodor's lemma, in contrast, where the function is constant on a sationary class, is not provable even in \KM, but it is provable if one assumes the class choice principle \CC. See \cite{GitmanHamkinsKaragila:KM-set-theory-does-not-prove-the-class-Fodor-theorem}.} Thus, this constant value $\kappa$ is the critical point of virtual elementary embeddings $j:\<V_\lambda,\in,A\intersect V_\lambda>\to\<V_\theta,\in,A\intersect V_\theta>$ for unboundedly many ordinals $\lambda$. By restricting these embeddings, it follows that $\kappa$ is the critical point of such virtual embeddings for every $\lambda>\kappa$, and so $\kappa$ is a (weakly) virtually $A$-extendible cardinal above $\gamma$.

Conversely, suppose that every class $A$ has a proper class of (weakly) virtually $A$-extendible cardinals, and suppose that $\mathcal{M}$ is a class of first-order structures in a common language. Let $\kappa$ be a virtual $\mathcal{M}$-extendible cardinal above the size of the common language. Let $\lambda$ be the $\kappa^{th}$ ordinal above $\kappa$ for which there is an element $M\in\mathcal{M}$ of rank $\lambda$. Since $\kappa$ is virtually $\mathcal{M}$-extendible, there is an ordinal $\theta$ and a virtual elementary embedding $j:\<V_{\lambda+1},\in,\mathcal{M}\intersect V_{\lambda+1}>\to \<V_{\theta+1},\in,\mathcal{M}\intersect V_{\theta+1}>$ with critical point $\kappa$. By elementarity, it follows that $j(M)\in \mathcal{M}$ has rank $j(\lambda)$, which is the $j(\kappa)^{th}$ ordinal above $j(\kappa)$ that is the rank of an element of $\mathcal{M}$. In particular, $\lambda<j(\lambda)$ and consequently $M\neq j(M)$. Meanwhile, since the language is fixed pointwise by $j$, it follows that $j\restrict M:M\to j(M)$ is an elementary embedding between distinct elements of $\mathcal{M}$, thus verifying this instance of the generic \Vopenka\ principle.
\end{proof}

Essentially identical arguments establish the scheme-theoretic version, where we assume that the classes $A$ and $\mathcal{M}$ are definable.

\begin{theorem}\label{Theorem.Equivalences-gVS}
 The generic \Vopenka\ scheme is equivalent over \ZFC\ to the scheme asserting of every definable class $A$ that there is a proper class of virtually $A$-extendible cardinals.
\end{theorem}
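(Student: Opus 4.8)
The plan is to adapt the proof of theorem~\ref{th:Equivalencies-for-gVP} verbatim, keeping careful track of which classes are definable and with what complexity, so that every class constructed in the argument lands inside the scheme's definable fragment. The statement of theorem~\ref{th:Equivalencies-for-gVP} concerns arbitrary classes $A$ and arbitrary class families $\mathcal{M}$ of structures, and the scheme-theoretic version restricts attention to the case where these are first-order definable from parameters. The two directions require slightly different bookkeeping, so I would treat them separately.

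For the forward direction, I would assume the generic \Vopenka\ scheme and fix a definable class $A$, say defined by a formula $\varphi(x,p)$, together with an ordinal bound $\gamma$. I would then run the pressing-down argument exactly as before, but observe that the proper class $\mathcal{M}$ of structures $\<V_\lambda,\in,A\intersect V_\lambda,\check\alpha>_{\alpha\leq\gamma}$ for which no suitable virtual embedding exists is itself definable: the condition ``$A\intersect V_\lambda$'' is definable from $\varphi$ and $p$, and the condition ``there is no $\theta$ with a virtual embedding'' is expressible using the game-theoretic characterization from the earlier lemma, which renders the existence of a virtual embedding a first-order property of the two structures (via the existence of a winning strategy for player II in the closed game $G(M,N)$). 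Hence the generic \Vopenka\ scheme applies to $\mathcal{M}$, and the rest of the argument---the reduction to singular $\lambda$, the definable pressing-down function $\lambda\mapsto\kappa$, and the conclusion that some fixed $\kappa$ works for unboundedly many and hence all $\lambda$---goes through unchanged, yielding a (weakly) virtually $A$-extendible cardinal above $\gamma$.

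For the converse, I would assume the scheme asserting a proper class of virtually $A$-extendible cardinals for every definable $A$, and fix a definable proper class $\mathcal{M}$ of first-order structures in a common language. Since $\mathcal{M}$ is definable, I may apply the hypothesis with $A=\mathcal{M}$ to obtain a virtually $\mathcal{M}$-extendible cardinal $\kappa$ above the size of the language. The remainder is identical to theorem~\ref{th:Equivalencies-for-gVP}: choosing $\lambda$ as the $\kappa^{th}$ ordinal above $\kappa$ realizing a rank in $\mathcal{M}$, extracting the virtual embedding $j$ on $\<V_{\lambda+1},\in,\mathcal{M}\intersect V_{\lambda+1}>$, and noting that $j(M)\neq M$ by the rank-counting argument so that $j\restrict M:M\to j(M)$ witnesses this instance of the scheme.

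The one genuinely new point, and the only place where any real care is needed, is the verification that the classes manufactured inside the argument remain definable of the appropriate complexity---in particular that the defining class $\mathcal{M}$ in the forward direction is first-order definable. I expect this to be the main (though minor) obstacle, and it is handled precisely by the game-theoretic lemma, which converts the apparently second-order notion ``there exists a forcing extension containing an embedding'' into the first-order notion ``player II wins $G(M,N)$,'' so that the complexity of $\mathcal{M}$ is controlled by the complexity of $A$ together with a bounded quantifier over the relevant structures. Everything else is a transcription of the preceding proof, which is why it suffices to remark, as the excerpt does, that ``essentially identical arguments'' apply.
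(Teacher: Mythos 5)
Your proposal is correct and follows essentially the same route as the paper, which proves this theorem simply by noting that the argument of theorem~\ref{th:Equivalencies-for-gVP} goes through verbatim once one checks that the auxiliary classes (in particular the class of structures admitting no virtual embedding) remain first-order definable. Your identification of the game-theoretic characterization as the tool that makes ``there is a virtual embedding'' first-order expressible is exactly the point the paper relies on, as is made explicit in its stratified refinement, theorem~\ref{th:Equivalencies-for-gVS}.
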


As we have mentioned, there is a subtle but critical difference from the corresponding results for the non-generic non-virtual forms~\cite[theorems 6,7]{Hamkins:The-Vopenka-principle-is-inequivalent-to-but-conservative-over-the-Vopenka-scheme}. Namely, in the non-generic non-virtual forms, the \Vopenka\ principle is equivalent both to the assertion merely that every class $A$ has at least one $A$-extendible cardinal and also to the assertion that every class $A$ has a stationary proper class of $A$-extendible cardinals; and similarly the \Vopenka\ scheme is equivalent to the corresponding assertions about definable classes $A$. Those implications, however, rely on the strong form of $A$-extendibility, and that is fine because as we have mentioned, in the non-generic context the weak and strong forms of extendibility are equivalent. In our generic/virtual context here, however, the equivalence breaks down, and the generic \Vopenka\ principle entitles one only to the weak form of virtual extendibility, and not the strong form. Indeed, in corollary \ref{Corollary.GVP-with-weak-but-not-strong} we prove that it is relatively consistent with \GBC\ that every class $A$ admits a proper class of weakly virtually $A$-extendible cardinals, but no class $A$ admits even a single strongly virtually $A$-extendible cardinal.

The equivalence of the weak and strong forms of $A$-extendibility in the non-generic context relies fundamentally on an appeal to the Kunen inconsistency, as in~\cite[theorem~6]{Hamkins:The-Vopenka-principle-is-inequivalent-to-but-conservative-over-the-Vopenka-scheme}. In the generic/virtual case, however, there is no virtual form of the Kunen inconsistency, for one can have virtual Reinhardt cardinals, embeddings of the form $j:V_{\lambda+2}\to V_{\lambda+2}$, but where the embedding is added by forcing, as in observation~\ref{ob:zero-sharp-implies-gVP-in-L}, and this prevents one from making the analogue of the argument leading to $\lambda<j(\kappa)$ and hence from the weak form to the strong form or from individual $A$-extendible cardinals to stationary proper classes of them. The main result of this paper shows that this issue is inherent, since we prove that it is relatively consistent with the generic \Vopenka\ principle that $\ORD$ is not Mahlo. In such a model, there can be no stationary proper class of virtually extendible or virtually $A$-extendible cardinals, since there is not even a stationary proper class of regular cardinals.

By paying careful attention to the precise complexity of the definitions of the various classes, we may next present a stratified version of theorem~\ref{Theorem.Equivalences-gVS}. Let us say that a cardinal $\kappa$ is \emph{$(\Sigma_n)$-extendible}, if it is $A$-extendible, where $A$ is the $\Sigma_n$-truth predicate. (Kindly note the difference in notation from the $\Sigma_n$-extendible cardinals, used in~\cite{BagariaHamkinsTsaprounisUsuba2016:SuperstrongAndOtherLargeCardinalsAreNeverLaverIndestructible} with a different meaning.) Similarly, we say that $\kappa$ is weakly or strongly \emph{virtually $(\Sigma_n)$-extendible}, if it is respectively weakly or strongly virtually $A$-extendible for that class.

\begin{theorem}\label{th:Equivalencies-for-gVS}
For $n\geq 1$, the following are equivalent as schemes over \ZFC.
  \begin{enumerate}
    \item The generic \Vopenka\ scheme holds for $\Pi_{n+1}$-definable classes.
    \item The generic \Vopenka\ scheme holds for $\Sigma_{n+2}$-definable classes.
    \item For every $\Sigma_n$-definable class $A$, there is a proper class of (weakly) virtually $A$-extendible cardinals.
    \item There is a proper class of (weakly) virtually $(\Sigma_n)$-extendible cardinals.
    \item There is a proper class of cardinals $\kappa$, such that for every $\Sigma_n$-correct cardinal $\lambda$ above $\kappa$, there is a $\Sigma_n$-correct cardinal $\theta>\lambda$ and a virtual elementary embedding $j:V_\lambda\to V_\theta$ with critical point $\kappa$.
  \end{enumerate}
\end{theorem}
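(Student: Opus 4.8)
The plan is to prove the five statements equivalent by routing through the truth-predicate reformulations and then importing the two main arguments of Theorem~\ref{th:Equivalencies-for-gVP} while carefully accounting for definitional complexity. Three bookkeeping facts drive everything. First, the $\Sigma_n$-truth predicate $T_n$ is $\Sigma_n$-definable, so adjoining it as a predicate keeps structures within reach of the $\Sigma_{n+2}$-scheme. Second, the class $C^{(n)}$ of $\Sigma_n$-correct cardinals $\lambda$, those with $V_\lambda\prec_{\Sigma_n}V$, is $\Pi_n$-definable. Third, and crucially, for fixed set structures $M$ and $N$ the assertion ``there is a virtual elementary embedding $j:M\to N$'' is absolute, by the game-theoretic characterization, and hence contributes essentially no quantifier complexity beyond naming $M$ and $N$. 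With these in hand I would organize the proof as the reformulations $(3)\Leftrightarrow(4)\Leftrightarrow(5)$, the coding equivalence $(1)\Leftrightarrow(2)$, and then the loop $(1)\Rightarrow(4)$ and $(3)\Rightarrow(1)$.

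The reformulations come first, as they are essentially bookkeeping. For $(3)\Rightarrow(4)$ one specializes to $A=T_n$. For $(4)\Rightarrow(3)$, given a $\Sigma_n$-definable class $A$ with parameter $p$, any weakly virtually $(\Sigma_n)$-extendible cardinal above $\operatorname{rank}(p)$ is weakly virtually $A$-extendible, since $A\intersect V_\lambda$ is uniformly definable from $T_n\intersect V_\lambda$ and $p$ whenever $\lambda\in C^{(n)}$, so a $T_n$-preserving embedding also preserves $A$; as there is a proper class of such cardinals, shifting above $\operatorname{rank}(p)$ is harmless. For $(4)\Leftrightarrow(5)$ I would show that a $T_n$-preserving embedding $j:\<V_\lambda,\in,T_n\intersect V_\lambda>\to\<V_\theta,\in,T_n\intersect V_\theta>$ is the same data as a plain elementary $j:V_\lambda\to V_\theta$ together with $\lambda,\theta\in C^{(n)}$: correctness of $\lambda,\theta$ makes the internal $\Sigma_n$-satisfaction of each structure coincide with the real $T_n$, so preserving $\Sigma_n$-truth is exactly preserving $T_n$. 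The equivalence $(1)\Leftrightarrow(2)$ is the standard witness-coding trick: $(2)\Rightarrow(1)$ is immediate since $\Pi_{n+1}\subseteq\Sigma_{n+2}$, and for $(1)\Rightarrow(2)$ one replaces a $\Sigma_{n+2}$ class, whose membership reads $\exists z\,\psi(x,z,p)$ with $\psi$ being $\Pi_{n+1}$, by the $\Pi_{n+1}$ class of structures coding a member together with a witness $z$, recovering an embedding of the original structures by restriction.

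For the forward direction $(1)\Rightarrow(4)$ I would follow the forward argument of Theorem~\ref{th:Equivalencies-for-gVP} verbatim, but with $T_n$ in place of $A$ and with every $\lambda,\theta$ restricted to $C^{(n)}$. Fixing $\gamma$, the key claim is that for all sufficiently large $\lambda\in C^{(n)}$ there are $\theta\in C^{(n)}$ above $\lambda$ and a virtual $j:V_\lambda\to V_\theta$ with critical point above $\gamma$. If this failed, the offending structures $\<V_\lambda,\in,\check\alpha>_{\alpha\leq\gamma}$ would form a proper class, and I must check its definition is $\Pi_{n+1}$: membership requires $\lambda\in C^{(n)}$, which is $\Pi_n$, together with the assertion that for every $\theta\in C^{(n)}$ above $\lambda$ no such virtual embedding exists; since embedding-existence is absolute and $\theta\in C^{(n)}$ is $\Pi_n$, the universal quantifier over $\theta$ keeps this at $\Pi_{n+1}$. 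Applying $\gVP(\Pi_{n+1})$ yields a forbidden embedding, a contradiction. The constant value of the pressing-down map sending $\lambda$ to the least forceable $\crit$ then gives a single $\kappa$ serving unboundedly many $\lambda\in C^{(n)}$, and restricting embeddings down to intermediate $\mu\in C^{(n)}$ works because $T_n$-preservation keeps the restricted target $\Sigma_n$-correct, delivering $(5)$ and hence $(4)$ for $\kappa$.

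The converse $(3)\Rightarrow(1)$ is where I expect the real difficulty. Given a $\Pi_{n+1}$ class $\mathcal{M}$, with membership reading $\forall y\,\sigma(x,y,p)$ for $\sigma\in\Sigma_n$, I would take $\kappa$ weakly virtually $(\Sigma_n)$-extendible above $\operatorname{rank}(p)$, choose $\lambda\in C^{(n)}$ above $\kappa$ in the style of Theorem~\ref{th:Equivalencies-for-gVP} so that a genuine $M\in\mathcal{M}$ of controlled rank is moved by the induced embedding $j:V_\lambda\to V_\theta$, and form $j\restrict M:M\to j(M)$. Since the embedding preserves only $T_n$ rather than $\mathcal{M}$ itself, I get for free that $M$ lies in the internal approximation $\{x:\forall y\,T_n(\ulcorner\sigma\urcorner,(x,y,p))\}$ and hence that $j(M)$ satisfies this approximation in $V_\theta$; but genuine membership $j(M)\in\mathcal{M}$ demands $\forall y\,\sigma(j(M),y,p)$ with $y$ ranging over all of $V$, and a failure could in principle be witnessed only by some $y$ of rank above $\theta$. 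This is precisely the reflection gap that the strong requirement $\lambda<j(\kappa)$ closes in the non-virtual case. My intended resolution is to arrange the domain to be $\Sigma_{n+1}$-correct, choosing $\lambda$ in the $\Pi_{n+1}$-definable club $C^{(n+1)}\subseteq C^{(n)}$, which still satisfies the $\Sigma_n$-correctness hypothesis of $(5)$, and to transfer enough correctness to $\theta$ through the $T_n$-preserving elementarity of $j$ together with the absoluteness of virtual embeddings, so that the target approximation becomes exact and $j(M)$ is genuinely in $\mathcal{M}$. Verifying that this transfer actually upgrades the $\Sigma_n$-correctness of $\theta$ to the exactness needed for $\Pi_{n+1}$-membership is the crux of the whole theorem, and is exactly the point at which the weak form of virtual extendibility must be shown to suffice.
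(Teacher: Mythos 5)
Your reductions among (1)--(5), apart from the final return to the scheme, are sound and essentially the paper's: the witness-coding for $(1)\Leftrightarrow(2)$, the passage through the $\Sigma_n$-truth predicate for $(3)\Leftrightarrow(4)$, the observation that a truth-predicate-preserving embedding carries $\Sigma_n$-correct domains to $\Sigma_n$-correct targets for $(4)\Rightarrow(5)$, and the pressing-down argument for the forward direction all match the paper. The genuine gap is in $(3)\Rightarrow(1)$ (equivalently $(5)\Rightarrow(1)$), exactly at the point you yourself flag as the crux: the resolution you propose does not work. You want to upgrade the correctness of the target $\theta$ so that the internal approximation $\mathcal{M}^{V_\theta}$ becomes exact and $j(M)$ lies genuinely in $\mathcal{M}$. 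But the hypothesis only ever supplies $\Sigma_n$-correct targets; choosing the domain $\lambda$ in $C^{(n+1)}$, or in $C^{(m)}$ for $m$ as large as you like, transfers nothing further to $\theta$, because the embedding is elementary only for the $\Sigma_n$-truth predicate and ``$\theta$ is $\Sigma_{n+1}$-correct'' is not a first-order property of $\<V_\theta,\in,T_n\intersect V_\theta>$. A witness refuting $\forall y\,\sigma(j(M),y,p)$ could live above $\theta$, and nothing in the weak form of virtual extendibility excludes this; indeed this possibility is the whole point of the paper.

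The paper's argument never verifies $j(M)\in\mathcal{M}$ at all. It uses only the one-sided inclusion $\mathcal{M}\intersect V_\theta\of\mathcal{M}^{V_\theta}$, which does follow from $\Sigma_n$-correctness of $\theta$ and the $\Pi_{n+1}$ form of the definition; this gives $M\in\mathcal{M}^{V_\theta}$ and, by elementarity, $j(M)\in\mathcal{M}^{V_\theta}$. Since $M\neq j(M)$ and $j\restrict M:M\to j(M)$ is elementary, the absoluteness lemma shows that $V_\theta$ satisfies ``there is a virtual embedding between two distinct elements of $\mathcal{M}^{V_\theta}$'' --- a statement about the possibly incorrect internal class. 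One then pulls this statement back through $j$ to $V_\lambda$, and this is where the extra correctness is spent: taking $\lambda\in C^{(m)}$ for $m$ much larger than $n$ (harmless, since $C^{(m)}$ is a club contained in $C^{(n)}$) makes $V_\lambda$ correct about this bounded-complexity assertion, so it holds in $V$. The conclusion is extracted at the domain $\lambda$, not at the target $\theta$; that is precisely how the weak form of extendibility, with no control over $j(\kappa)$ or over the correctness of $\theta$ beyond $\Sigma_n$, suffices. Without this reflect-back-to-the-domain step your argument does not close.
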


\begin{proof}
We should like to emphasize that for none of the embeddings here do we insist that $\lambda<j(\kappa)$; we are using the weak forms only.

($1\to 2$) Assume that the generic \Vopenka\ scheme holds for $\Pi_{n+1}$-definable classes and that $\mathcal{M}$ is a $\Sigma_{n+2}$ definable class of structures, defined by $x\in\mathcal{M}\iff \exists y\, \psi(x,y,p)$, where $\psi$ is $\Pi_{n+1}$ and $p$ is a fixed parameter. For each structure $x\in\mathcal{M}$, let $Y_x$ be the set of minimal-rank $y$ witnessing that $x\in\mathcal{M}$ by the defining property, and form a new structure $x^+$ by adding $Y_x$ to $x$ as a new point, if necessary, and interpreting it as a new constant symbol. Let $\mathcal{N}$ be the class of structures $x^+$ obtained in this way. This class is $\Pi_{n+1}$-definable and any embedding of structures in $\mathcal{N}$ gives rise to an embedding of the corresponding structures in $\mathcal{M}$, as desired.

($2\to 3$) Argue as in theorem~\ref{th:Equivalencies-for-gVP}. Assume that $A$ is a $\Sigma_n$-definable class, and fix any $\gamma$. First, the collection of structures $\<V_\lambda,\in,A\intersect V_\lambda,\check\alpha>_{\alpha\leq\gamma}$ that have no virtual embedding to some $\<V_\theta,\in,A\intersect V_\theta,\check\alpha>_{\alpha\leq\gamma}$ is $\Pi_{n+1}$-definable, and so we get a virtual embedding from one of them to another, with critical point above $\gamma$. So for all sufficiently large $\lambda$, there are such embeddings with some critical point, and we may apply the weak class Fodor lemma to find a single $\kappa$ that works unboundedly often.

($3\to 4$) This is immediate, since the $\Sigma_n$-truth predicate is $\Sigma_n$-definable.

($4\to 5$) If $\kappa$ is (weakly) virtually $(\Sigma_n)$-extendible and $\lambda$ is $\Sigma_n$-correct, then we get $j:\<V_\lambda,\in,A\intersect V_\lambda>\to \<V_\theta,\in,A\intersect V_\theta>$ with critical point $\kappa$, where $A$ is the $\Sigma_n$-truth predicate. Since $V_\lambda$ can verify that $A\intersect V_\lambda$ agrees with $\Sigma_n$-truth in $V_\lambda$, this will also be true for $V_\theta$, and so $\theta$ must also be $\Sigma_n$-correct, as desired.

($5\to 1$) Suppose that $\mathcal{M}$ is a $\Pi_{n+1}$-definable class of first-order structures in a common language, defined so that $x\in\mathcal{M}\iff \forall z\, \varphi(x,z,a)$, where $\varphi$ has complexity $\Sigma_n$. Let $\kappa$ be as in statement (5) and larger than the size of the language used for structures in $\mathcal{M}$. Let $m$ be much larger than $n$ and let $\lambda>\kappa$ be any $\Sigma_{m}$-correct ordinal. So it is also $\Sigma_n$-correct. By our assumption on $\kappa$, there is a virtual elementary embedding $j:V_\lambda\to V_\theta$ with critical point $\kappa$, where $\theta$ is $\Sigma_n$-correct. Let $M\in\mathcal{M}$ be any structure with rank amongst the $\kappa^{th}$ rank to occur for structures in $\mathcal{M}$. This is observed correctly in $V_\lambda$. By the elementarity of the embedding, $V_\theta$ thinks that $j(M)$ is in $\mathcal{M}^{V_\theta}$, although $V_\theta$ may be wrong about this class. Since $V_\theta$ is $\Sigma_n$-correct, however, it is correct about $\varphi$, and from this it follows that $\mathcal{M}\intersect V_\theta\of\mathcal{M}^{V_\theta}$. In particular, $V_\theta$ knows $M\in\mathcal{M}^{V_\theta}$. Meanwhile, since $j(M)$ has the $j(\kappa)^{th}$ rank, it is not hard to see that $M\neq j(M)$, and since the critical point of $j$ is above the language of $M$, it follows that $j\restrict M:M\to j(M)$ is an elementary embedding. By the absoluteness lemma~\ref{lem:absolutenessLemma}, there will be such an embedding in any forcing extension collapsing $M$ to be countable. So $V_\theta$ thinks that there is a virtual embedding between two elements of $\mathcal{M}^{V_\theta}$. By the elementarity of $j$, it follows that $V_\lambda$ must also think this is true. But by the choice of $\lambda$, we know that $V_\lambda$ is right about this. So we have verified this instance of the generic \Vopenka\ scheme.
\end{proof}

The equivalencies proved in theorem~\ref{th:Equivalencies-for-gVS} can be seen as uniformizing some of the results of~\cite{BagariaGitmanSchindler:VopenkaPrinciple}. To avoid the bifurcation there into the two cases, such as getting from $\gVP(\Sigma_2)$ either a proper class of remarkable cardinals or a proper class of virtual rank-into-rank cardinals, what was needed was to drop the requirement that $\lambda<j(\kappa)$ for the embeddings, and then one gets a pure equivalence as above. The generic \Vopenka\ principle simply doesn't entitle one to embeddings $j$ with the stronger property that $\lambda<j(\kappa)$.

\section{A model of the generic \Vopenka\ principle in which the ordinals are not Mahlo}\label{sec:GVP}

We shall now construct a model of \GBC\ plus the generic \Vopenka\ principle in which the ordinals are not Mahlo. The main idea will be to adapt Hamkins's observation that one can easily separate the \Vopenka\ principle from the \Vopenka\ scheme.

\begin{theorem}[\cite{Hamkins:The-Vopenka-principle-is-inequivalent-to-but-conservative-over-the-Vopenka-scheme}]
 If the \Vopenka\ scheme holds, then there is a class-forcing extension $V[C]$ where it continues to hold, yet, in which the \Vopenka\ principle fails and $\ORD$ is not Mahlo, although it remains definably Mahlo.
\end{theorem}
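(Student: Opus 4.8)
The plan is to add, by class forcing over $V$, a closed unbounded class $C$ of ordinals that is disjoint from the regular cardinals. Concretely, I would let $\p$ be the forcing whose conditions are closed bounded sets $c\of\ORD$ containing no regular cardinal, ordered by end-extension, so that the generic object $C=\bigcup\set{c : c\in G}$ is a class club avoiding every regular cardinal. The payoff is immediate once the forcing is shown to be well-behaved: $C$ witnesses that $\ORD$ is not Mahlo in $V[C]$, and since the second-order \Vopenka\ principle implies that $\ORD$ is Mahlo, the \Vopenka\ principle must fail in $V[C]$, where $C$ is now an available class.

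Two technical properties of $\p$ will carry the argument. First, I would show that $\p$ is ${<}\ORD$-strategically closed, and hence adds no new sets and preserves $\ZFC$ (and $\GBC$). The only danger in continuing a descending sequence of conditions is that the supremum $\delta$ of the maxima reached at some limit stage $\eta$ might be a regular cardinal, making $c\union\set{\delta}$ illegal; but this occurs only when $\delta=\eta$, and the good player can preempt it by always keeping the current maximum at least $\aleph_{\xi}$ at game-stage $\xi$, forcing $\mathrm{cf}(\delta)=\mathrm{cf}(\eta)\le\eta<\aleph_\eta\le\delta$, so that $\delta$ is singular and legal. Second, and this is the crux, I would show that $\p$ is sufficiently homogeneous that adding $C$ introduces no new definable classes with set parameters. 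The forcing is cone homogeneous: given conditions $c,d$, extend each to $c',d'$ sharing a common maximum $\delta$, whereupon the cones below $c'$ and $d'$ are isomorphic via the map $c'\union e\mapsto d'\union e$ fixing everything past $\delta$. Since the relevant parameters are ground-model sets, named by check-names that every automorphism fixes, cone homogeneity guarantees that the empty condition $\one$ decides every statement $\varphi(\check x,\check p)$; consequently $\set{x : V[C]\models\varphi(x,p)}=\set{x:\one\forces\varphi(\check x,\check p)}$ is already definable in $V$.

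With these two facts the remaining claims follow by transfer. Because $\p$ adds no sets, the regular cardinals and the $V_\lambda$ hierarchy of $V[C]$ agree with those of $V$, and because the set-parameter-definable classes of $V[C]$ coincide with those of $V$, every definable class of structures in $V[C]$ is a definable class of structures in $V$; the \Vopenka\ scheme in $V$ then supplies a set-sized elementary embedding between two of them, which persists to $V[C]$, so the \Vopenka\ scheme continues to hold. For definable Mahloness I would first recall that the \Vopenka\ scheme already implies $\ORD$ is definably Mahlo in $V$, since for a definable class club $D$ any $D$-extendible cardinal is inaccessible and a limit point of $D$, hence a regular member of $D$; then, since any definable class club of $V[C]$ is a definable class club of $V$, it too contains a regular cardinal of $V=V[C]$.

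The main obstacle, I expect, is the homogeneity step: isolating exactly the right closure-and-symmetry properties of the end-extension club forcing so that (i) it adds no sets despite the limit-stage danger at regular cardinals, and (ii) it leaves the family of set-parameter-definable classes unchanged. Property (ii) is precisely what decouples the preservation of the first-order \Vopenka\ scheme and of definable Mahloness from the destruction of the second-order \Vopenka\ principle and of genuine Mahloness, both of which are effected by the single newly adjoined class $C$.
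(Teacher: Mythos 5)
Your overall strategy is the same as the paper's: force with the end-extension club forcing $\p$ to add a class club $C$ avoiding the regular cardinals, note that $C$ witnesses the failure of Mahloness and hence of the \Vopenka\ principle, and transfer the first-order facts. But your closure argument contains a step that fails as written. You have the good player keep the maximum of the current condition at least $\aleph_\xi$ at game-stage $\xi$ and conclude $\mathrm{cf}(\delta)=\mathrm{cf}(\eta)\le\eta<\aleph_\eta\le\delta$ at a limit stage $\eta$. The inequality $\eta<\aleph_\eta$ is false whenever $\eta$ is a fixed point of the aleph function, and when $\eta$ is weakly inaccessible (such $\eta$ certainly exist under the \Vopenka\ scheme), the supremum $\delta$ can be exactly $\aleph_\eta=\eta$ with $\mathrm{cf}(\delta)=\eta$, so that $\delta$ is regular and $c\union\set{\delta}$ is not a condition; the described strategy breaks at precisely those limit stages. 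The repair is standard and is what the paper does: for each fixed $\gamma$, the conditions whose maximum exceeds $\gamma$ form a dense subclass of $\p$ that is ${\leq}\gamma$-closed, since any descending sequence of length at most $\gamma$ of such conditions has supremum of cofinality at most $\gamma$, which is strictly below the supremum and hence singular. This yields ${\leq}\gamma$-distributivity for every $\gamma$, which is all that is needed to see that no new sets are added and \GBC\ is preserved. Equivalently, in the game of length $\eta$ the good player should simply jump above $\eta$ at its first move rather than creep upward via the $\aleph_\xi$.

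Separately, your cone-homogeneity argument is correct but unnecessary, and it is not how the paper proceeds. The \Vopenka\ scheme and definable Mahloness quantify only over classes defined by first-order formulas in the language of set theory with set parameters, interpreted over the first-order part of the model; since the forcing adds no new sets, the structure $\la V,\in\ra$ is literally unchanged, so every such first-order assertion---including the scheme and the definable Mahloness of $\ORD$---is preserved outright, with no need to decide statements at $\one$. Homogeneity would earn its keep only if new sets were added and one needed to control which classes become definable from set parameters; here it buys nothing.
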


The class forcing $\p$ is simply the standard forcing to kill $\ORD$ is Mahlo, the forcing to add a class club $C$ avoiding the regular cardinals. Conditions in $\p$ are closed bounded sets containing no regular cardinals, ordered by end-extension. Over the \GBC\ model in which it is defined, this forcing is ${\leq}\gamma$-distributive for every ordinal $\gamma$, because in fact the collection of conditions that reach above $\gamma$ is a ${\leq}\gamma$-closed dense subclass of the forcing. Consequently, forcing with $\p$ over the model in which it is defined adds no new sets and preserves \GBC. For these reasons, this forcing is amongst the nicest kind of class forcing that there is: over any \GBC\ model, this forcing is definable; it has a definable forcing relation; it adds no new sets; and it preserves \GBC. Since the generic class $C$ itself witnesses that $\ORD$ is not Mahlo in the extension $V[C]$, it follows that the \Vopenka\ principle must fail there, but because the forcing adds no new sets, it preserves the \Vopenka\ scheme and consequently also the definable Mahloness of $\ORD$.

We shall adapt the method here in order to prove that the generic \Vopenka\ scheme is also relatively consistent with the non-Mahloness of $\ORD$.

\begin{theorem}\label{th:ORDnotMahlo}
Assume $0^{\#}$ exists in $V$. Then there is a class-forcing notion $\p$ definable in the constructible universe $L$, such that in any $L$-generic extension $L[C]$ by this forcing, \GBC\ and the generic \Vopenka\ principle hold, yet $\ORD$ is not Mahlo.
\end{theorem}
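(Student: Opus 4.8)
The plan is to reuse the forcing $\p$ from the preceding theorem, now defined inside $L$, and to show that although it destroys the Mahloness of $\ORD$, the abundance of Silver indiscernibles supplied by $0^\sharp$ keeps the generic \Vopenka\ principle alive. First I would set up $\p$ in $L$ exactly as before: conditions are closed bounded sets of ordinals containing no $L$-regular cardinals, ordered by end-extension. As in the preceding theorem this forcing is ${\leq}\gamma$-distributive for every $\gamma$ (the conditions reaching above $\gamma$ form a ${\leq}\gamma$-closed dense subclass), it is definable with a definable forcing relation, it adds no new sets, and it preserves $\GBC$; and the generic club $C$ witnesses directly that $\ORD$ is not Mahlo in $L[C]$. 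All of this is routine given the template of the preceding theorem, so the entire burden falls on verifying the generic \Vopenka\ principle in $L[C]$.

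For that, the key structural observation is that since $\p$ adds no new sets, every first-order structure appearing in any class $\mathcal M$ of $L[C]$ is already a set of $L$. This means I can run the indiscernibility argument behind proposition~\ref{ob:zero-sharp-implies-gVP-in-L}, with one crucial modification to accommodate the fact that $\mathcal M$ may now be defined using the generic class $C$. In proposition~\ref{ob:zero-sharp-implies-gVP-in-L} one used an indiscernibility embedding $j\colon L\to L$ fixing the (definable) class and hence mapping it to itself; here $j$ need not respect $C$, so $j(\mathcal M)\neq\mathcal M$ in general and that route is blocked. I would therefore not route through virtual $A$-extendibility at all, but instead verify the principle directly, by locating a conjugate pair of structures already inside $\mathcal M$.

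Concretely, fix a proper class $\mathcal M$ of $\mathcal L$-structures in $L[C]$, where $\mathcal L$ is a set-sized language. Writing each $S\in\mathcal M$ as a Skolem term in Silver indiscernibles, $S = t_S(\vec\imath_S)$, and noting that there are only set-many terms and arities while $\mathcal M$ is a proper class, a pigeonhole step yields a single term $t$ and arity $n$ shared by a proper subclass of $\mathcal M$. The corresponding index-tuples form a proper class of increasing $n$-tuples of indiscernibles; using that $\ORD^n$ under coordinatewise domination is a well-quasi-order, I would extract two tuples $\vec\imath$ and $\vec\imath'$, with $S = t(\vec\imath)$ and $S' = t(\vec\imath')$ distinct and with all indices above the rank of $\mathcal L$, that are linked by a strictly increasing map of the indiscernibles. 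That map induces an indiscernibility embedding $j\colon L\to L$ with $j(\vec\imath)=\vec\imath'$ and critical point above $\mathcal L$, so that $j(S)=S'$ and $j\restrict S\colon S\to S'$ is an elementary embedding of $\mathcal L$-structures lying in $V$. By the absoluteness lemma~\ref{lem:absolutenessLemma}, after collapsing $S$ to be countable there is a virtual embedding $S\to S'$ in a set-forcing extension of $L[C]$, and since $S$ and $S'$ are two distinct members of $\mathcal M$ this verifies the given instance of the generic \Vopenka\ principle. The point is that, unlike in proposition~\ref{ob:zero-sharp-implies-gVP-in-L}, I never ask $j$ to preserve $C$ or $\mathcal M$; I only ask for two members of $\mathcal M$ that happen to be indiscernibility-conjugate, and a proper class of structures from $L$ cannot avoid furnishing such a pair.

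The main obstacle, then, is precisely this combinatorial extraction: showing that a proper class of increasing $n$-tuples of indiscernibles must contain two tuples related by an order-embedding of $\ORD$, while simultaneously keeping the critical point above $\mathcal L$ and keeping the two term-values $t(\vec\imath)$ and $t(\vec\imath')$ distinct. I expect to handle it by passing to the tuples all of whose indices exceed the rank of $\mathcal L$ (still a proper class), forming their gap-vectors in $\ORD^n$, and invoking the well-quasi-ordering to find an infinite coordinatewise-increasing chain, from which the required strictly increasing index map, and hence the embedding $j$, can simply be read off; distinctness follows since a single term value repeated along a proper class would contradict $\mathcal M$ being a proper class of structures. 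Everything else—the distributivity and set-preservation of $\p$, the preservation of $\GBC$, and the failure of Mahloness—is inherited directly from the template of the preceding theorem.
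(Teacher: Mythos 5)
Your proposal is essentially correct but takes a genuinely different route from the paper's. The paper never reasons about the class $\mathcal M$ via Skolem terms; instead it proves a density lemma (lemma~\ref{le:keyLemma1}): the class of conditions $c$ admitting an ordinal $\theta$ with $L_\theta\prec_{\Sigma_n}L$, with $c\cap\theta$ being $L_\theta$-generic for $\p^{L_\theta}$, and with a virtual self-embedding of $\la L_\theta,\in,c\cap\theta\ra$ of high critical point, is dense in $\p$. The density is witnessed by hand-building an $L_\theta$-generic $c$ between two blocks of Silver indiscernibles (using distributivity to reduce to countably many dense classes) and taking $c$ to be $L$-least, so that the shift-of-indiscernibles embedding fixes $c$; the genericity of $C\cap\theta$ then yields $\la L_\theta,\in,C\cap\theta\ra\prec_{\Sigma_m}\la L,\in,C\ra$ via the forcing relation, so that $\mathcal M$ is absolute to $L_\theta$ and both $M$ and $j(M)$ lie in $\mathcal M$. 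Your argument instead exploits the fact that $\p$ adds no new sets, so $\mathcal M\of L$, and hunts for an indiscernibility-conjugate pair already inside $\mathcal M$; as you note, this sidesteps any need for $j$ to respect $C$ or $\mathcal M$. I believe this works, and it in fact shows more: under $0^\sharp$, \emph{any} \GBC\ structure whose first-order part is $L$ satisfies the generic \Vopenka\ principle, the forcing being needed only to kill the Mahloness of $\ORD$. The trade-off is that your method is tied to set-preserving forcing: it cannot be adapted to theorem~\ref{th:DefinableORDnotMahlo}, where the coding forcing adds new sets and the structures in $\mathcal M$ need no longer be constructible, whereas the paper's density-lemma template carries over there (lemma~\ref{le:keyLemma2}).

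Two points in your sketch need repair. First, the parenthetical claim that the tuples all of whose indices exceed the rank of $\mathcal L$ still form a proper class is false in general: the canonical Skolem representations of the structures in $\mathcal M$ may be forced to use small indiscernibles as parameters (a fixed small indiscernible is not definable in $L$ without indiscernible parameters), so that subclass could be a set or even empty. The fix is to pigeonhole instead on the below-$\delta$ portion of each tuple, which has only set-many possible values, obtaining a proper class of tuples sharing a common low part and with all remaining coordinates above $\delta$; your gap-vector and well-quasi-order extraction then applies to the high parts, and the order-embedding of the indiscernibles is taken to be the identity below $\delta$. Second, you should justify that the induced $j:L\to L$ has critical point above the rank of $\mathcal L$ and fixes the language pointwise: this follows from the remarkability of the Silver indiscernibles (a term value below the least moved indiscernible, computed from fixed indiscernibles together with larger ones, does not depend on the larger ones), or, as the paper arranges, by placing the first moved indiscernible at an uncountable cardinal $\kappa_0$ of $V$, so that $L_{\kappa_0}$ is the Skolem hull of the indiscernibles below $\kappa_0$, all of which are fixed.
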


\begin{proof}
To begin, assume $0^\sharp$ exists, and let $\p$ be the class-forcing notion, as defined in $L$, to add a class club $C$ avoiding the regular cardinals. In $L$, this forcing is ${\leq}\gamma$-distributive for every ordinal $\gamma$, and as we mentioned earlier, using this forcing over $L$ adds no new sets; it has a definable forcing relation; and it preserves \GBC. We view $L[C]$ as a \GBC\ model having the classes that are definable from $C$, or in other words, definable in the structure $\<L,\in,C>$. Since $C$ is a class club containing no regular cardinals, it follows that $L[C]$ thinks that $\ORD$ is not Mahlo.

Our use of class forcing is a bit unusual here, because although we assume $0^\sharp$ exists, we do not force over $V$, but rather only over $L$. We make the $0^\sharp$ assumption only in order to establish a certain density property for the forcing $\p$ in $L$, in order to know that it will succeed when used to force over $L$. Indeed, while the forcing $\p$ is very nice for forcing over $L$, meanwhile it is much less nice to force with $\p$ over $V$---this will definitely destroy \GBC. The reason is that because of $0^\sharp$, the full model $V$ already has a class club of $L$-regular cardinals, but these two class clubs cannot intersect. So if $C$ is $V$-generic for $\p$, then in $V[C]$ we would be able to define a countable sequence cofinal in the ordinals, violating \GBC. This is not a problem for our argument, however, because we shall make no reference to forcing over $V$ and we shall never form the extension $V[C]$. Instead, our desired model is $L[C]$, which is a model of $\GBC$, whose first-order part (namely the collection of sets) is $L$.

What remains is to prove that the generic \Vopenka\ principle holds in $L[C]$. For this, we make a density argument in the following lemma scheme. Since $\p$ is definable in $L$, for any ordinal $\theta$, we may consider the analogue of the forcing $\p^{L_\theta}$ as defined inside $L_\theta$. A set of ordinals $c$ is $L_\theta$-generic for $\p^{L_\theta}$ if it meets all dense sets of $\p^{L_\theta}$ definable over $L_\theta$, in the sense that every such dense set contains an initial segment of $c$.

\begin{sublemma}\label{le:keyLemma1}
Consider any ordinal $\delta$ and suppose $n$ is a particular natural number of the meta-theory. Let $D_{\delta,n}$ be the collection of conditions $c\in\p$ for which there is an ordinal $\theta$ such that
\begin{enumerate}
    \item $L_\theta\prec_{\Sigma_n}L$,
    \item $c\intersect\theta$ is $L_\theta$-generic for $\p^{L_\theta}$, and
    \item in some forcing extension of $L$, there is an elementary embedding
    $$j:\la L_\theta,\in,c\intersect\theta\ra\to \la L_\theta,\in, c\intersect\theta\ra$$ with critical point above $\delta$.
\end{enumerate}
Then $D_{\delta,n}$ is a definable dense subclass of $\p$ in $L$.
\end{sublemma}

\begin{proof}
Fix any ordinal $\delta$ and any particular meta-theoretic natural number $n$ (the lemma is a scheme as $n$ changes). We want to show $D_{\delta,n}$ is dense in $\p$. Since the class $D_{\delta,n}$ gets smaller as $n$ increases, we may imagine without loss that $n$ is very large. Fix any condition $d\in\p$. We shall find $\bar c\in D_{\delta,n}$ extending $d$. Let $\kappa_0$ be any uncountable cardinal of $V$ above $\delta$ and the supremum of $d$. Next, let
 $$\kappa_0<\kappa_1<\cdots<\kappa_n<\cdots<\kappa_\omega<\kappa_{\omega+1}$$
be the next $\omega+2$ many successive Silver indiscernibles. Let $\theta$ be the least ordinal above $\kappa_\omega$ such that $L_\theta\prec L_{\kappa_{\omega+1}}$. Note that because $\kappa_{\omega+1}$ is a Silver indiscernible, this will imply $L_\theta\prec L$, and in particular, $L_\theta\prec_{\Sigma_n} L$. The ordinal $\theta$ has cofinality $\omega$ in $L$ and is a limit cardinal there, and so we may find an $\omega$-sequence of $L$-cardinals $$\theta_0<\theta_1<\cdots<\theta_n<\cdots$$ cofinal in $\theta$ in $L$. We shall now construct $c\subseteq\theta$ end-extending $d$ that is $L_\theta$-generic for $\p^{L_\theta}$.

For each $k\in\omega$, let $D_k$ be the intersection of all open dense subclasses of $\p^{L_\theta}$ that are $\Sigma_k$-definable in $L_\theta$ using parameters in $L_{\theta_k}$. That is, we limit both the complexity of the definition and the space of parameters. Since $\Sigma_k$-truth is definable, the model $L_\theta$ has uniform definable access to its $\Sigma_k$-definable classes. And since there are only $\theta_k$ many parameters involved, the model $L_\theta$ may therefore enumerate its $\Sigma_k$-definable classes with parameters in $L_{\theta_k}$ in a single $\theta_k$-sequence. Since the forcing $\p^{L_\theta}$ is ${\leq}\theta_k$-distributive, it follows that $D_k$ is an open dense subclass of $\p^{L_\theta}$.

The classes $D_k$ provide a countable collection of dense subclasses that suffice for $L_\theta$-genericity, since any subclass of $\p^{L_\theta}$ that is definable in $L_\theta$ will be contained in some $D_k$. By the usual diagonalization procedure, therefore, we may build a set $c\of\theta$ extending $d$ by successively extending it so as to meet each $D_k$ in turn. It follows that $c\of\theta$ is $L_\theta$-generic for $\p^{L_\theta}$. Since this diagonalization construction can be carried out inside $L$, we may find such $L_\theta$-generic sets $c$ in $L$. Henceforth, let $c$ be the $L$-least such $L_\theta$-generic set for $\p^{L_\theta}$ extending $d$. Note that $c$ is definable in $L$ from parameters $d$, $\kappa_\omega$, and $\kappa_{\omega+1}$.

Let $j:L\to L$ be the elementary embedding generated by a shift of the indiscernibles $\kappa_n$, so that $j(\kappa_n)=\kappa_{n+1}$ for $n\in\omega$, and fixing all other indiscernibles. Since we chose $\kappa_0$ to be an uncountable cardinal of $V$, it follows that it is a limit of smaller indiscernibles, which generate $L_{\kappa_0}$. It follows that the critical point of $j$ is precisely $\kappa_0$ and in particular, $j(d)=d$. Since $\kappa_\omega$ and $\kappa_{\omega+1}$ are fixed by $j$, it follows similarly that $j(\theta)=\theta$. Since $c$ was defined by those fixed points, it also follows that $j(c)=c$. Thus, the restriction of $j$ to $L_\theta$ gives an elementary embedding
 $$j:\la L_\theta,\in, c\ra\to \la L_\theta,\in,c\ra.$$
By the absoluteness lemma~\ref{lem:absolutenessLemma}, it follows that in some forcing extension of $L$, there is an elementary embedding $$j^*: \la L_\theta,\in,c\ra\to \la L_\theta,\in,c\ra$$ with critical point $\kappa_0$. Finally, let $\bar c=c\union\{\theta\}$ be the closure of this set, and observe that this is a condition in $\p$ precisely because $\theta$ is singular in $L$. We have therefore verified all the necessary requirements to conclude that $\bar c\in D_{\delta,n}$, and since $\bar c$ extends $d$, we have therefore proved that this class is dense, establishing lemma~\ref{le:keyLemma1}.
\end{proof}

Using the lemma, we shall now complete the proof of theorem~\ref{th:ORDnotMahlo}. Fix any proper class $\mathcal M$ in $L[C]$ of first-order structures in a common language $\mathcal L$. Since we have included only the $C$-definable classes in $L[C]$, we may assume that $\mathcal{M}$ is defined by some $\Sigma_m$-formula $\psi(x,a,C)$ with class parameter $C$ and set parameter $a\in L$. Let $\delta$ be above the rank of the language $\mathcal L$ and the parameter $a$, and let $n$ be much larger than $m$. By the lemma, there is some ordinal $\theta$ such that the corresponding initial segment of $C$ is in $D_{\delta,n}$. So there is, in some forcing extension of $L$, an elementary embedding
 $$j:\la L_\theta,\in, C\cap \theta\ra\to \la L_\theta,\in,C\cap\theta\ra$$
with critical point $\kappa$ above $\delta$.

We claim next that
    $\la L_\theta,\in,C\cap\theta\ra\prec_{\Sigma_m} \la L,\in,C\ra$.
To see this, suppose that $\la L_\theta,\in,C\cap \theta\ra\models\varphi(b)$ for some $\Sigma_{m}$-formula $\varphi(x)$. This must be forced over $L_\theta$ by some condition $c$, an initial segment of $C\intersect\theta$. By the choice of $n$ as much larger than $m$, what we had meant was that it should be large enough to express this forcing relation, and so since $L_\theta\prec_{\Sigma_n}L$ it follows that $c$ forces $\varphi(b)$ also over $L$. So $\<L,\in,C>\models\varphi(b)$, as desired.

It follows that the definition of the class $\mathcal{M}$ is absolute to $\<L_\theta,\in,C\intersect\theta>$. Let $M$ be the $\kappa^{th}$ element of $\mathcal{M}$ in the $L$-order. It follows that $j(M)$ is the $j(\kappa)^{th}$ element, which is identified correctly by $L_\theta$, and consequently $M\neq j(M)$. The restriction $j\restrict M:M\to j(M)$ is an elementary embedding between distinct elements of $\mathcal{M}$, and so we have verified this instance of the generic \Vopenka\ principle in $L[C]$, as desired, completing the proof of theorem~\ref{th:ORDnotMahlo}.
\end{proof}

It was convenient to use $0^\sharp$ in the previous argument, in order to know that $D_{\delta,n}$ is dense in $\p$, but it is natural to inquire whether this use can be weakened or eliminated. We shall discuss this in section~\ref{sec:Eliminate-zero-sharp}.

\section{A model of the generic \Vopenka\ scheme in which the ordinals are not $\Delta_2$-Mahlo}\label{sec:GVS}

We should now like to sharpen the result of the previous section by performing further forcing so as to make the class club $C$ definable, in fact $\Delta_2$-definable, while preserving the generic \Vopenka\ scheme. The result will be a model of \GBC\ plus the generic \Vopenka\ scheme in which there are no $\Sigma_2$-reflecting cardinals (inaccessible $\Sigma_2$-correct cardinals). Consequently, there will also be no remarkable cardinals.

\begin{theorem}\label{th:DefinableORDnotMahlo}
Assume $0^\#$ exists in $V$. Then there is a definable class-forcing notion in $L$, such that in the corresponding $L$-generic extension, \GBC\ holds, the generic \Vopenka\ scheme holds, but $\ORD$ is not definably Mahlo. Indeed, in this model there is a $\Delta_2$-definable class club avoiding the regular cardinals.
\end{theorem}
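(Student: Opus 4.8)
The plan is to obtain the model by a two-step class forcing over $L$, namely $\p*\dot\q$, where $\p$ is the forcing of theorem~\ref{th:ORDnotMahlo} adding a class club $C$ avoiding the regular cardinals, and $\dot\q$ is a coding forcing, definable from $C$, that writes $C$ into the continuum pattern so as to render it $\Delta_2$-definable in the final extension. The whole iteration $\p*\dot\q$ is definable in $L$, and I would check that it is tame and suitably progressively closed over $L$, so that it preserves $\GBC$; as in the previous theorem, we take the final model to be the $L$-generic extension, equipped with the classes definable over it. Unlike theorem~\ref{th:ORDnotMahlo}, the coding step genuinely adds sets, so the first-order part of the model is no longer $L$, and this is precisely why we shall be able to preserve only the generic \Vopenka\ \emph{scheme} rather than the full second-order principle.

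For the coding itself, I would fix an $L$-definable continuous increasing enumeration $\<\gamma_\xi : \xi \in \ORD>$ of a club of cardinals and force, by an Easton-style product preserving cardinals and cofinalities (so that $C$ continues to avoid the regular cardinals), to arrange the value of the continuum function at $\gamma_\xi$ to take one of two canonical values according to whether $\xi \in C$, always using the $L$-canonical local generic at each coordinate. Since the value of the continuum function at a cardinal is verifiable inside a sufficiently large $V_\theta$, both ``$\xi \in C$'' and its negation become $\Sigma_2$ assertions, whence $C$ is $\Delta_2$-definable. Because $C$ is then a $\Delta_2$-definable class club containing no regular cardinals, $\ORD$ is not $\Delta_2$-Mahlo in the extension, and by the remarks preceding the theorem this rules out any $\Sigma_2$-reflecting cardinal and hence any remarkable cardinal.

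The heart of the argument is the preservation of the generic \Vopenka\ scheme, and here I would prove a density lemma in the style of lemma~\ref{le:keyLemma1} for $\p*\dot\q$. For a given ordinal $\delta$ and meta-theoretic $n$, the relevant dense class consists of conditions below which one can find an ordinal $\theta$ with $L_\theta\prec_{\Sigma_n}L$ that also correctly reflects the coding, such that the restriction $G_\theta$ of the generic below $\theta$ is locally generic and, in some forcing extension, there is an elementary self-embedding
$$j : \<L_\theta[G_\theta], \in, C \intersect \theta> \to \<L_\theta[G_\theta], \in, C\intersect\theta>$$
with critical point above $\delta$. As in lemma~\ref{le:keyLemma1}, I would produce the underlying map by shifting a block of Silver indiscernibles $\kappa_0<\cdots<\kappa_\omega<\kappa_{\omega+1}$, choosing $\theta$ canonically from the fixed indiscernibles $\kappa_\omega,\kappa_{\omega+1}$ so that the indiscernibility embedding $j:L\to L$ fixes $\theta$ and $C\intersect\theta$ and restricts to a nontrivial self-embedding of $L_\theta$ with critical point $\kappa_0>\delta$.

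The hard part will be lifting this embedding through the coding generic $G_\theta$, since $G_\theta\notin L$ and so, unlike the situation in lemma~\ref{le:keyLemma1}, there is no $L$-least generic for $j$ to fix outright. My plan is to exploit that the local coding poset is definable from the fixed object $C\intersect\theta$ and is therefore itself fixed by $j$, and then to invoke the absoluteness lemma~\ref{lem:absolutenessLemma}: passing to a forcing extension in which $L_\theta[G_\theta]$ has become countable, I would diagonally build an image generic compatible with $j$ and use the canonicity and homogeneity of the coding to realign it with $G_\theta$, thereby lifting $j$ to the required virtual self-embedding. Once the density lemma is in hand, the verification of each instance of the generic \Vopenka\ scheme proceeds exactly as in theorem~\ref{th:ORDnotMahlo}: given a $\Sigma_m$-definable proper class $\mathcal{M}$ of structures, one descends to such a $\theta$ with $\<L_\theta[G_\theta], \in, C\intersect\theta>\prec_{\Sigma_m}\<L[G], \in, C>$, checks that the definition of $\mathcal{M}$ is absolute to this structure, lets $M$ be the $\kappa^{th}$ element of $\mathcal{M}$ so that $M\neq j(M)$, and reads off that $j\restrict M:M\to j(M)$ is the desired virtual elementary embedding between distinct members of $\mathcal{M}$.
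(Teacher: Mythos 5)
Your setup---the two-step iteration $\p*\dot\q$, the Easton-style coding of $C$ into the continuum pattern to make it $\Delta_2$-definable, and the strategy of a density/reflection lemma with a shifted block of Silver indiscernibles $\kappa_0<\cdots<\kappa_\omega<\kappa_{\omega+1}$---matches the paper. The gap is at the step you yourself flag as the hard part: lifting $j$ through the coding generic. First, you never arrange that the coding forcing is trivial on the interval $[\kappa_0,\kappa_\omega)$, which is exactly where $j$ moves ordinals. The paper does this by first extending the $\p$-condition to $d^+=d\union\{\kappa_\omega+1\}$, so that the club jumps over the entire block of shifted indiscernibles and the local coding forcing $\q_\theta$ factors as $\q^\tail_\theta\times\q^\sm_\theta$ with $\q^\sm_\theta$ of size below the critical point and $\q^\tail_\theta$ being ${\leq}\kappa_\omega$-closed. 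Without this, the product has nontrivial coordinates at cardinals $\gamma\in[\kappa_0,\kappa_\omega)$ which $j$ sends to $j(\gamma)$; the generic at coordinate $\gamma$ and the generic at coordinate $j(\gamma)$ are mutually generic, so $j\image G_\theta\of G_\theta$ fails outright, and no homogeneity of the coding poset can repair this: a ground-model automorphism cannot carry the transported generic of coordinate $\gamma$ onto the generic of coordinate $j(\gamma)$, since the latter is generic over the model containing the former.

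Second, even after the factorization, the plan to ``realign'' the image generic with the actual $G_\theta$ is not how the argument can go, and it is not what the paper does. The paper does not lift $j$ onto the given generic at all; instead it constructs, in a collapse extension of $V$, a \emph{fresh} $L$-generic filter $H^\tail_\theta\of\q^\tail_\theta$ every condition of which is $L$-definable from Silver indiscernibles other than the $\kappa_n$ and is therefore fixed by $j$ pointwise, so that $j\image H^\tail_\theta=H^\tail_\theta$ and the lifting criterion is met for free. This requires a separate argument that such conditions can be found densely: one shows, using the ${\leq}\kappa_\omega$-closure of $\q^\tail_\theta$ to intersect over all parameter tuples below $\kappa_\omega$, that meeting only the dense sets definable from indiscernibles not below $\kappa_\omega$ already suffices for full $L$-genericity, and that $L$-least extensions into such dense sets are again definable from non-$\kappa_n$ indiscernibles. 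The lemma is then concluded by the device that no condition can force the negation of the reflected statement, since below any condition one can realize such a $\theta$ and such a generic; the actual $G_\theta$ is never itself lifted. You need to supply both of these ingredients (the jump condition $d^+$ and the construction of a $j$-invariant generic, together with the ``no condition forces the negation'' framing) for your proposal to close.
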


\begin{proof}
The forcing will be a two-step iteration, although this forcing can also be viewed as a single-step class forcing. First, we force with $\p$ as in section~\ref{sec:GVP} to add a class club $C$ avoiding the regular cardinals of $L$. We proved in theorem~\ref{th:ORDnotMahlo}, under the assumption that $0^\sharp$ exists, an assumption we also have here, that the generic \Vopenka\ principle holds in $L[C]$, but the ordinals are not Mahlo there. Next, we force over $L[C]$ with the forcing $\q$ that codes the class $C$ into the continuum pattern. Specifically, $\q$ is the ${\ORD}$-length Easton product forcing, as defined in $L[C]$, which forces violations of the $\GCH$ exactly at the successor cardinals of the elements of $C$. This forcing is very mild in terms of class forcing. It is definable; it factors in the Easton manner at every element of $C$ into the product of small forcing and highly closed forcing; it is therefore progressively closed; and consequently, the forcing $\q$ preserves \GBC\ and has a definable forcing relation satisfying the forcing theorems.

The two-step iteration $\p*\dot\q$ can be viewed as a one-step non-iterative class forcing, simply by viewing the conditions as pairs $(d,q)$, where $d$ is a closed bounded set in $L$ avoiding the $L$-regular cardinals, and $q$ is a condition in the corresponding Easton-support product forcing to code $d$ into the \GCH\ pattern. This is dense in the iteration forcing, consisting of pairs $(d,\dot q)$ where $d\in\p$ and $\dot q$ is a $\p$-name for a condition in $\dot\q$, because the $\p$ forcing adds no new sets and so one may simply strengthen the first coordinate so as to decide the value of $\dot q$ in the second coordinate. Further, if $\dot q$ is any $\p$-name for a condition in $\dot\Q$, then there is a set-sized maximal antichain of conditions $d\in\p$ that decide the value of $\dot q$ as $\check q$ for some $q\in L$. Thus, any name for the iteration forcing can be transformed to a name for the non-iterative combined forcing we mentioned, and therefore the two versions of the forcing give rise to the same forcing extensions. So we needn't think of it as an iteration at all, and the combined forcing inherits the nice properties of $\p$ and $\q$ and will preserve \GBC\ and have a definable forcing relation satisfying the forcing theorem and so on.

Having added the generic class club $C$, let $G\subseteq \q$ be $L[C]$-generic, and we consider the forcing extension $L[C][G]$. This is a \GBC\ model, whose sets are those added by the $\Q$ forcing and whose classes are those definable from the generic classes $C$ and $G$. But actually, since ultimately we aim merely to construct a \ZFC\ model, let us use the notation $L[G]$ to refer only to the first-order part of this model, having only the sets and not the classes. That is, we take
 $$L[G]=\{\,\tau_G\mid \tau\text{ is a }\q\text{-name in }L\,\}$$
to consist of the interpretations via $G$ of the $\q$-names in $L$. This is a model of \ZFC, since it is the first-order part of a model of \GBC.

The coding forcing $\q$ ensures that the successor cardinals of the elements of $C$ can be identified in a $\Delta_2$ manner in the extension $L[G]$, and so the class $C$ is $\Delta_2$-definable in $L[G]$. Thus, in $L[G]$ we have a parameter-free $\Delta_2$-definable class club avoiding the regular cardinals. So $\ORD$ is definitely not Mahlo there, and not even $\Delta_2$-Mahlo.

It remains to argue that $L[G]$ satisfies the generic \Vopenka\ scheme. Let $\q_\theta$ be the factor of $\q$ consisting of the forcing only on the coordinates below $\theta$, that is, with conditions having support contained in $\theta$. Note that if $C\intersect\theta$ is unbounded in $\theta$, but $\theta$ is singular, then because of the nature of the Easton support, the forcing $\q$ will have conditions with support unbounded in $\theta$. In particular, $\q_\theta$ also has such conditions, and so we cannot view $\q_\theta$ as class forcing over the structure $\<L_\theta,\in,C\intersect\theta>$. That structure simply doesn't include all the conditions of $\q_\theta$. Nevertheless, since $C\intersect\theta$ is a set in $L$, it follows that $\q_\theta$ is a forcing notion in $L$. If $G\of\Q$ is $L$-generic, let us denote by $G_\theta$ the restriction of $G$ to $\q_\theta$.

\begin{sublemma}\label{le:keyLemma2}
 Suppose that $\delta$ is an ordinal and $n$ is a particular natural number of the meta-theory. Then there is an ordinal $\theta$ for which
  \begin{enumerate}
    \item $L_\theta\prec_{\Sigma_n}L$,
    \item $C\cap\theta$ is generic for dense subsets of $\p^{L_\theta}$ that are $\Sigma_n$-definable in $L_\theta$,
    \item $G_\theta\intersect \q^{L_\theta[C\cap\theta]}$ is generic for dense subsets of $\q^{L_\theta[C\intersect\theta]}$ that are $\Sigma_n$-definable in $L_\theta[C\intersect\theta]$, and
    \item in a forcing extension of $L[G]$, there is an elementary embedding $j:L_\theta[G_\theta]\to L_\theta[G_\theta]$ with critical point above $\delta$.
  \end{enumerate}
\end{sublemma}

\begin{proof}
This is a lemma scheme, taken as $n$ varies. But since the statement becomes harder as $n$ increases, we may assume without loss that $n$ is very large. Fix $n$ and any ordinal $\delta$. Let us explain a little further about what we mean in statements (3) and (4). The forcing notion $\q_\theta$ is the factor of $\q$ at coordinates up to $\theta$, but since $\theta$ is singular, this allows for conditions with unbounded support. The forcing notion $\q^{L_\theta[C\intersect\theta]}$, in contrast, is defined just as $\q$, except internally to $L_\theta[C\intersect\theta]$, which means that we now take only the bounded-support conditions. Since $G$ is fully $L[C]$-generic for $\q$, it follows easily that $G_\theta\of\q_\theta$ is fully $L[C\intersect\theta]$-generic for $\q_\theta$. What we are claiming in statement (3), however, is something a bit more, namely, that for this particular $\theta$, the bounded-support fragment of $G_\theta$, meaning $G_\theta\intersect\q^{L_\theta[C\intersect\theta]}$, is generic for dense subsets of $\q^{L_\theta[C\intersect\theta]}$ that are $\Sigma_n$-definable in $L_\theta[C\intersect\theta]$. In statement (4), the structure $L_\theta[G_\theta]$ can therefore be viewed either as the extension arising from the bounded-support forcing, or as $V_\theta^{L[G]}$, since it is not difficult to see that these are the same (for $n\geq 1$), since every new set is added by a stage.

To prove the lemma, since the statement of the lemma is expressible in the forcing language using the canonical names for the generic filters, it suffices to show that no condition forces the negation of the statement.\footnote{Note that if we had stated in the lemma that $L_\theta\prec L$, which is what we shall achieve in the proof, using the $0^\sharp$ assumption, then the statement of the lemma would not be expressible in $L$ in the forcing language, since $L$ cannot express the property $L_\theta\prec L$. It shall be enough for our purpose, however, to require only $L_\theta\prec_{\Sigma_n} L$ and state the lemma as a scheme over natural numbers $n$ in the meta-theory.} Fix any condition $(d,r)\in\p*\q$ in our class forcing.

We proceed at first as in lemma~\ref{le:keyLemma1}. Let $\kappa_0$ be an uncountable cardinal of $V$ above the ranks of $d$ and $r$ and also above $\delta$. Let
    $$\kappa_0<\kappa_1<\cdots<\kappa_n<\cdots<\kappa_\omega<\kappa_{\omega+1}$$
be the next $\omega+2$ many successive Silver indiscernibles.

Let $d^+=d\union\{\kappa_\omega+1\}$, which jumps over all the $\kappa_n$'s and is a condition in $\p$. To find $\theta$, suppose temporarily that $d^+$ agrees with $C$ and $r\in G$ (if this is not the case, replace them with generic filters that do have this property). In the extension $L[C][G]$, we have \ZFC\ in the language with predicates for $C$ and $G$. And $C$ is certainly generic for $\Sigma_n$-definable subclasses of $\p$ and $G$ is generic for dense subclasses of $\q$ that are $\Sigma_n$-definable in $L[C]$. By the reflection theorem, therefore, there must be an ordinal $\theta$ of cofinality $\omega$ reflecting these facts to $\theta$, so that $L_\theta\prec_{\Sigma_n} L$ and $C\intersect\theta$ is $\Sigma_n$-generic and $G\intersect\q^{L[C\intersect\theta]}$ is $\Sigma_n$-generic for $\q^{L_\theta[C\intersect\theta]}$ over $L_\theta[C\intersect\theta]$. So there must be a condition $(c,q)$ extending $(d^+,r)$ and forcing that $C\intersect\theta$ and $\dot G_\theta$ are like this. Note that no part of $c$ or $q$ above $\theta$ can matter for this property, and so we may assume $\sup(c)=\theta$ and $q\in \q_\theta$ as defined from $c$.

Now, throw away the previous actual $C$ and $G$, and let $\theta$ be the least ordinal with cofinality $\omega$ for which $\kappa_\omega<\theta<\kappa_{\omega+1}$ and there is a condition $(c,q)$ in the forcing $\p*\dot\q$ forcing the properties we have mentioned. And let $(c,q)$ be the $L$-least such condition extending $(d^+,r)$. So these conditions are definable from the parameters $(d,r)$, $\kappa_\omega$, and $\kappa_{\omega+1}$.

In $V$, let $j:L\to L$ be the elementary embedding generated by shifting the indiscernibles $\kappa_n$, so that $j(\kappa_n)=\kappa_{n+1}$ for all $n\in\omega$, but all other indiscernibles are fixed. Since $\kappa_0$ is an uncountable cardinal of $V$, it follows as before that the critical point of $j$ is exactly $\kappa_0$, and so $j$ fixes both $d$ and $r$. Since it also fixes $\kappa_\omega$ and $\kappa_{\omega+1}$, it follows by the definability considerations that it fixes $\theta$, $c$ and $q$.

Consider now just the forcing $\q_\theta$ defined by $c$ in $L$. We claim that in a suitable forcing extension of $V$, we will be able to find an $L$-generic filter $H_\theta\of\q_\theta$ with $q\in H_\theta$, such that $j$ lifts to
    $$j:L[H_\theta]\to L[H_\theta]$$
and such that $j(H_\theta)=H_\theta$. To begin with this, note that since $c$ is empty on the interval $[\kappa_0,\kappa_\omega)$, it follows that $\q_\theta$ is trivial on this interval. So the coding forcing $\q_\theta$ factors as very small forcing $\q^\sm_\theta$ to code $c$ below $\kappa_0$, followed by the coding forcing $\q^\tail_\theta$, which is above $\kappa_\omega$. So we have $$\q_\theta\cong \q^\tail_\theta\times \q^\sm_\theta,$$ where $\q^\tail_\theta$ is $\lesseq\kappa_\omega$-closed in $L$ and $\q^\sm_\theta$ has size less than $\kappa_0$ in $L$. We may similarly factor the condition $q$ as $q=q^\sm\times q^\tail$.

We claim that in order for a filter $H\of\q^\tail_\theta$ to be $L$-generic, it suffices for $H$ to meet all the open dense subsets of this forcing that are definable in $L$ using only indiscernible parameters not smaller than $\kappa_\omega$. To see this, suppose $D\subseteq \q_\theta^\tail$ is an arbitrary open dense subset of the forcing in $L$. Since every set in $L$ is definable from the Silver indiscernibles, there is a formula $\varphi$ and indiscernible parameters $\vec \kappa_\sm,\vec\kappa_\tail$, such that
    $$p\in D\ \leftrightarrow\ \varphi(p,\vec \kappa_\sm,\vec\kappa_\tail),$$
where $\vec\kappa_\sm$ are below $\kappa_\omega$ and $\vec\kappa_\tail$ are not.  Let
    $$D_{\vec\alpha}=\{p\in \q_\theta^\tail\mid\varphi(p,\vec\alpha,\vec\kappa_\tail)\},$$
where we allow arbitrary parameters $\vec\alpha$ in place of $\vec\kappa_\sm$. Let $\bar D$ be the intersection of all $D_{\vec\alpha}$ that happen to be open and dense in $\q_\theta^\tail$, ranging over all $\vec\alpha<\kappa_\omega$. Since the forcing $\q_\theta^\tail$ is ${\leq}\kappa_\omega$-closed, it follows that $\bar D$ is dense open. But furthermore, by its nature, $\bar D$ is definable from $\vec\kappa_\tail$, which are Silver indiscernibles not less than $\kappa_\omega$. If a filter meets $\bar D$, then it also meets $D$, since $\bar D\of D$.

Notice furthermore that if a condition $p\in\q_\theta^\tail$ is definable from indiscernible parameters other than the $\kappa_n$'s and a dense set $D$ is definable from non-$\kappa_n$ indiscernible parameters, then the $L$-least element in $D$ extending $p$ will also be definable from such parameters. Moreover, any such condition is a fixed point $j(p)=p$, since $j$ fixes all indiscernibles except the $\kappa_n$.

Let's go now to a forcing extension $V[K]$ by collapsing $\theta^+$ to become countable. We shall argue that in $V[K]$ we can construct the desired filter $H_\theta^\tail$. The extension $V[K]$ sees that there are only countably many subsets of $\q_\theta^\tail$ in $L$, and it can observe which of them are open and dense and definable in $L$ using only indiscernibles not less than $\kappa_\omega$. If $\<D_n\mid n<\omega>$ is the enumeration of these, then let $q_0$ be the $L$-least condition in $D_0$ extending $q^\tail$. Similarly, let $q_{n+1}$ be the $L$-least extension of $q_n$ in $D_{n+1}$. Thus, if $H^\tail_\theta$ is the filter generated by the conditions $q_n$, it will be $L$-generic for $\q_\theta^\tail$ and contain the condition $q^\tail$. Further, since each $q_n$ is definable from indiscernibles below $\kappa_\omega$ and above, it follows by the observation of the previous paragraph that $j(q_n)=q_n$ for each $n$. Thus, $j\image H^\tail_\theta=H^\tail_\theta$ and so we fulfill the lifting criterion for the embedding $j:L\to L$, which now lifts to $j:L[H^\tail_\theta]\to L[H^\tail_\theta]$, with $j(H^\tail_\theta)=H^\tail_\theta$.

And if $H^\sm_\theta$ is any further $L[H^\tail_\theta]$-generic filter for $\q_\theta^\sm$, then since this forcing is below the critical point of $j$ it also lifts easily to $j:L[H_\theta]\to L[H_\theta]$, where $H_\theta=H^\sm_\theta\times H^\tail_\theta\of\q_\theta$, with $j(H_\theta)=H_\theta$. We may assume $q^\sm\in H^\sm_\theta$ and therefore $q\in H_\theta$.
So in a forcing extension of $V$, we have found the kind of embedding we claimed.

It follows by the absoluteness lemma~\ref{lem:absolutenessLemma} that in a forcing extension of $L$, there is an $L$-generic filter $H_\theta\of\q_\theta$ containing the condition $q$ and an elementary embedding $j^*:L_\theta[H_\theta]\to L_\theta[H_\theta]$ with critical point above $\delta$. We also have $L_\theta\prec L$. Now we can extend $c$ and $H_\theta$ fully to $L$-generic filters $C$ and $H$ for the forcing $\p*\dot\q$. So finally, because $(c,q)\in C*H$, we have that $C\intersect\theta$ is $\Sigma_n$-generic for $\p^{L_\theta}$ over $L_\theta$ and $H_\theta\intersect\q^{L_\theta[C\cap\theta]}$ is $\Sigma_n$-generic over $L_\theta[C\cap\theta]$.

Since we have attained precisely the properties stated in the lemma, but for a generic filter containing the original condition $(d,r)$, our argument shows that $(d,r)$ could not have forced that this situation does not happen. And so lemma~\ref{le:keyLemma2} is proved.
\end{proof}

We now continue with the proof of theorem~\ref{th:DefinableORDnotMahlo}. In order to prove that the generic \Vopenka\ scheme holds in $L[G]$, suppose that $\mathcal M=\set{x\mid\psi(x,a)}$ is a definable class of first-order structures in a common language in $L[G]$, defined by the $\Sigma_m$-formula $\psi$ with parameter $a$. Let $n$ be much larger than $m$, and let $\delta$ be large enough so that the common language of the structures in $\mathcal{M}$ and the parameter $a$ have rank less than $\delta$. By lemma~\ref{le:keyLemma2}, there is an ordinal $\theta$ such that $L_\theta\prec_{\Sigma_n} L$, with $C\intersect\theta$ being $\Sigma_n$-generic for $\p^{L_\theta}$ over $L_\theta$ and $G\intersect\q^{L_\theta[C\intersect\theta]}$ being $\Sigma_n$-generic for $\q^{L_\theta[C\intersect\theta]}$ over $L_\theta[C\intersect\theta]$, and such that, in some forcing extension of $L$, there is an embedding $j:L_\theta[G_\theta]\to L_\theta[G_\theta]$ with critical point above $\delta$.

Since $n$ is much larger than $m$, it follows by the definability of the forcing relation that $\mathcal{M}$ is absolute to $L_\theta[G_\theta]$. Let $M$ be a structure in $\mathcal{M}$ of the $\kappa^{th}$ rank occurring in this class. So $j(M)$ is in $\mathcal{M}$ of the $j(\kappa)^{th}$ rank, and so $M\neq j(M)$. Since the common language of the structures in $\mathcal{M}$ is fixed by $j$, it follows that $j\restrict M:M\to j(M)$ is an elementary embedding. So we have witnessed the existence of a virtual elementary embedding between distinct structures in $\mathcal{M}$, and thereby verified this instance of the generic \Vopenka\ scheme in $L[G]$, as desired. This completes the proof of theorem~\ref{th:DefinableORDnotMahlo}.
\end{proof}

\begin{corollary}
  If $0^\sharp$ exists, then there is a class-forcing extension $L[G]$ of the constructible universe in which the generic \Vopenka\ principle holds, but there are no $\Sigma_2$-reflecting cardinals and hence no remarkable cardinals.
\end{corollary}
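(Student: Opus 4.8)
The plan is to read the corollary off the construction already carried out for theorem~\ref{th:DefinableORDnotMahlo}, upgrading its conclusion from the scheme to the full second-order principle and then extracting the two large-cardinal consequences. Let $L[C][G]$ be the \GBC\ model produced there, whose first-order part is $L[G]$ and whose classes are precisely those definable from the generic class $G$ (with set parameters): since the class club $C$ is itself $\Delta_2$-definable in $L[G]$, it contributes no additional classes, so every class of the model is definable from $G$ alone. The first order facts below all refer to $L[G]$, while the generic \Vopenka\ principle is asserted of the \GBC\ model $L[C][G]$.

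First I would verify that $L[C][G]$ satisfies the \emph{full} generic \Vopenka\ principle, not merely the scheme. The argument is identical to the last paragraph of the proof of theorem~\ref{th:DefinableORDnotMahlo}, now applied to an arbitrary class $\mathcal M$ of structures in $L[C][G]$, which I may take to be defined by a $\Sigma_m$-formula $\psi(x,a,G)$ allowing the class predicate $G$. Given such $\mathcal M$, choose $n\gg m$ and $\delta$ above the rank of the common language and of $a$, and apply lemma~\ref{le:keyLemma2} to obtain $\theta$ with $L_\theta\prec_{\Sigma_n}L$, the stated genericity of $C\cap\theta$ and of $G_\theta$, and a virtual elementary embedding $j\colon L_\theta[G_\theta]\to L_\theta[G_\theta]$ with critical point $\kappa>\delta$. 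The one point beyond the scheme argument that must be checked is that this $j$ respects the predicate $G_\theta=G\cap V_\theta$; this holds because the embedding built in the proof of lemma~\ref{le:keyLemma2} fixes the generic filter ($j(H_\theta)=H_\theta$), so applying the absoluteness lemma~\ref{lem:absolutenessLemma} to the \emph{expanded} structure $\la L_\theta[G_\theta],\in,G_\theta\ra$ carrying the predicate yields an embedding elementary for that structure. Consequently $\mathcal M\cap V_\theta$ is computed correctly and is mapped to itself by $j$; the structure $M$ of the $\kappa^{th}$ rank occurring in $\mathcal M$ goes to the structure $j(M)$ of the $j(\kappa)^{th}$ rank, whence $M\neq j(M)$, and $j\restrict M\colon M\to j(M)$ is the desired virtual embedding between distinct elements of $\mathcal M$. (Alternatively, since the displayed embeddings fix $G_\theta$, they witness virtual $G$-extendibility, and one could invoke theorem~\ref{th:Equivalencies-for-gVP}.)

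Next I would rule out $\Sigma_2$-reflecting cardinals in $L[G]$ using the parameter-free $\Delta_2$-definable class club $C$ avoiding the regular cardinals. Suppose toward a contradiction that $\kappa$ is $\Sigma_2$-reflecting, that is, inaccessible with $V_\kappa\prec_{\Sigma_2}L[G]$. Since the defining formula for $C$ is $\Delta_2$, the model $V_\kappa$ computes $C\cap\kappa$ correctly, and $\Sigma_2$-reflection of the unboundedness of $C$ shows that $C\cap\kappa$ is unbounded in $\kappa$; by closure of the club it follows that $\kappa\in C$, contradicting that $\kappa$ is regular and that $C$ contains no regular cardinals. Hence there are no $\Sigma_2$-reflecting cardinals, and since every remarkable cardinal is $\Sigma_2$-reflecting, there are likewise none of those. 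The main obstacle is the first step—upgrading the scheme to the full principle—which hinges entirely on the observation that the embeddings from lemma~\ref{le:keyLemma2} fix the generic filter and therefore respect $G$ as a predicate, so that classes defined using $G$ (which, as $C$ is already $\Delta_2$-definable, exhaust the classes of the \GBC\ model) are handled by the very same reflect-and-shift argument that handled the definable classes; the cardinal-arithmetic step is then a routine reflection computation together with the known implication that remarkable cardinals are $\Sigma_2$-reflecting.
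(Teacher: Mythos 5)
Your proof is correct, and the second half---ruling out $\Sigma_2$-reflecting cardinals via the parameter-free $\Delta_2$-definable club $C$ (correctness of $C\cap\kappa$ in $V_\kappa$, unboundedness by reflection, hence $\kappa\in C$ by closure, contradicting regularity) and then citing that remarkable cardinals are $\Sigma_2$-reflecting---is exactly the paper's argument. Where you differ is in the first half: the paper simply asserts that the model of theorem~\ref{th:DefinableORDnotMahlo} satisfies the generic \Vopenka\ principle, quietly eliding the distinction between the scheme (which that theorem proves for $L[G]$) and the full second-order principle for the \GBC\ model carrying $G$ as a class; you instead rerun the argument for classes defined from the predicate $G$, observing that the embeddings built in lemma~\ref{le:keyLemma2} satisfy $j(H_\theta)=H_\theta$ and so, via the absoluteness lemma~\ref{lem:absolutenessLemma} applied to the expanded structure $\la L_\theta[G_\theta],\in,G_\theta\ra$, respect $G$ as a predicate. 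This is a legitimate and arguably necessary strengthening (it implicitly upgrades the statement of lemma~\ref{le:keyLemma2} to include the predicate, which its proof supports), and it buys a genuinely second-order conclusion rather than one restricted to the definable classes of $L[G]$; the only things left implicit are the routine extension of the $\prec_{\Sigma_m}$-absoluteness computation to the forcing language with $\dot G$, and the standing issue (also unaddressed by the paper) of which class family one takes to witness \GBC\ in the extension.
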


\begin{proof}
Consider the model $L[G]$ constructed in theorem~\ref{th:DefinableORDnotMahlo}. In that model, we have the generic \Vopenka\ principle, yet there is a parameter-free $\Delta_2$-definable class club $C$ containing no regular cardinals. Since the definition of $C$ uses no parameters, it follows that if $\kappa$ were $\Sigma_2$-reflecting, then $C\intersect\kappa$ would be unbounded in $\kappa$ and consequently $\kappa\in C$, contrary to the assumption that there are no regular cardinals in $C$. So there can be no $\Sigma_2$-reflecting cardinals. And since every remarkable cardinal is $\Sigma_2$-reflecting, it follows similarly that there can be no remarkable cardinals in $L[G]$.
\end{proof}

This provides the negative answer to question~\ref{Question:gVS-implies-remarkables?}.

\begin{corollary}\label{Corollary.GVP-with-weak-but-not-strong}
 It is relatively consistent with \GBC\ that every class $A$ admits a (weakly) virtually $A$-extendible cardinal (and so the generic \Vopenka\ principle holds), but no class $A$ admits a (strongly) virtually $A$-extendible cardinal.
\end{corollary}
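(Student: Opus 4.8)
The plan is to use exactly the model $L[G]$ constructed in theorem~\ref{th:DefinableORDnotMahlo}, which already realizes the generic \Vopenka\ principle while making $\ORD$ fail to be Mahlo in a strong, definable way. The positive half of the corollary is then immediate: by theorem~\ref{th:Equivalencies-for-gVP}, the generic \Vopenka\ principle (which holds in $L[G]$) is equivalent to the assertion that every class $A$ admits a proper class---hence in particular at least one---of weakly virtually $A$-extendible cardinals. So the work lies entirely in the negative half, namely verifying that no class $A$ admits a \emph{strongly} virtually $A$-extendible cardinal in $L[G]$.

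For the negative half, the idea is to exploit the $\Delta_2$-definable class club $C$ avoiding the regular cardinals. I would take $A=C$ itself, or more generally reduce to a single witnessing class $A$ that encodes $C$, and argue that a strongly virtually $A$-extendible cardinal would be forced to be regular, contradicting the non-Mahloness. Concretely, suppose toward a contradiction that $\kappa$ is strongly virtually $A$-extendible for some class $A$ that is rich enough to see $C$ (for instance, let $A$ code both $C$ and the $\Sigma_2$-truth predicate, so that the defining structures can recognize the regular cardinals of $L[G]$). Fix some $\lambda>\kappa$ that is a limit point of $C$ and is sufficiently correct, and take the strong virtual embedding $j:\<V_\lambda,\in,A\cap V_\lambda>\to\<V_\theta,\in,A\cap V_\theta>$ in a forcing extension, with $\crit(j)=\kappa$ and, crucially, $\lambda<j(\kappa)$. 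The point of the strong requirement $\lambda<j(\kappa)$ is that it forces $\kappa$ to reflect the relevant structure below it: because $j$ moves $\kappa$ past $\lambda$, standard reflection arguments (exactly the kind used to derive inaccessibility from extendibility in the non-virtual setting) show that $\kappa$ must be a regular---indeed inaccessible and $\Sigma_2$-correct---cardinal of $L[G]$.

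The main obstacle is the last inference, and it is the heart of the argument: one must verify that the \emph{virtual} strong embedding, living only in a forcing extension, still delivers the regularity of $\kappa$ in the ground model $L[G]$. The key is that regularity and $\Sigma_2$-correctness of $\kappa$ are $\Sigma_2$-expressible facts about $V_\lambda$ (respectively $V_\theta$) that are preserved under the embedding and are absolute between $L[G]$ and its forcing extension---regularity is downward absolute to inner models and a cardinal collapse cannot be undone, while the embedding certifies $\Sigma_2$-correctness via the coded truth predicate in $A$. Since $\lambda<j(\kappa)$, elementarity transfers the observation that $\kappa$ is inaccessible-and-$\Sigma_2$-correct in the target past $\lambda$, so $\kappa$ genuinely is a $\Sigma_2$-reflecting cardinal of $L[G]$. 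But the preceding corollary established that $L[G]$ has no $\Sigma_2$-reflecting cardinals, and more directly, the parameter-free $\Delta_2$-definition of $C$ forces any such $\kappa$ into $C$, contradicting that $C$ contains no regular cardinals. This contradiction shows no class $A$ can admit a strongly virtually $A$-extendible cardinal, completing the proof. I expect the delicate point to be confirming that the weak-to-strong gap is precisely what separates the two halves: the generic \Vopenka\ principle supplies only the weak form (by theorem~\ref{th:Equivalencies-for-gVP}), and the strong form would illegitimately yield regular cardinals that the model has been engineered to lack.
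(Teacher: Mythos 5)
Your overall strategy is exactly the paper's: work in the model $L[G]$ of theorem~\ref{th:DefinableORDnotMahlo}, get the weak half from theorem~\ref{th:Equivalencies-for-gVP}, and kill the strong half by showing that a strongly virtually $A$-extendible cardinal would be $\Sigma_2$-reflecting, which that model provably lacks. The one genuine flaw is the direction of your reduction in the negative half. The corollary requires ruling out strongly virtually $A$-extendible cardinals for \emph{every} class $A$, but you only treat classes $A$ ``rich enough to see $C$,'' and the reduction you gesture at (``reduce to a single witnessing class $A$ that encodes $C$'') goes the wrong way: enriching the predicate makes $A$-extendibility a \emph{stronger} demand, so from the existence of a strongly virtually $B$-extendible cardinal for some arbitrary $B$ you cannot pass to a strongly virtually $A$-extendible cardinal for your enriched $A$. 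The correct move is the opposite one, which the paper takes: forget the predicate entirely. Any embedding $j:\<V_\lambda,\in,A\cap V_\lambda>\to\<V_\theta,\in,A\cap V_\theta>$ with $\lambda<j(\kappa)$ is in particular an elementary embedding $j:V_\lambda\to V_\theta$, so a strongly virtually $A$-extendible cardinal for any $A$ is strongly virtually extendible simpliciter.

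Once you make that reduction, your coded truth predicate is also unnecessary: plain strong virtual extendibility already yields $\Sigma_2$-reflection, because a true $\Sigma_2$ statement is witnessed in some $V_\alpha$ with $\alpha<\lambda<j(\kappa)\leq\theta$, hence holds in $V_{j(\kappa)}$ as computed in $V_\theta$, hence in $V_\kappa$ by elementarity of $j$ (and inaccessibility of $\kappa$ follows from the usual arguments, which survive virtualization since the relevant witnesses --- cofinal maps, injections into power sets --- live in $V_\lambda$ and $V_\theta$, which are genuine rank initial segments of the ground model). With that repair your argument coincides with the paper's one-paragraph proof; the conclusion via the parameter-free $\Delta_2$-definable club is the same.
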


\begin{proof}
If $\kappa$ is (strongly) virtually extendible, then $\kappa$ is clearly $\Sigma_2$-reflecting, since the targets $j(\kappa)$ can be chosen as high in $V$ as desired, thereby capturing any witness of a $\Sigma_2$ assertion. In this case, $\kappa$ would be $\Sigma_2$-reflecting, contrary to the existence of a parameter-free $\Delta_2$-definable club containing no regular cardinals in the model of theorem \ref{th:DefinableORDnotMahlo}.
\end{proof}

\section{Can we weaken or eliminate the $0^\sharp$ assumption?}\label{sec:Eliminate-zero-sharp}

In our main theorem, we had assumed the existence of $0^\sharp$ in $V$ in order to attain the generic \Vopenka\ principle and scheme in a class-forcing extension $L[G]$ of the constructible universe, in which the ordinals were not $\Delta_2$-Mahlo. In our model, there was a parameter-free $\Delta_2$-definable class club $C$ containing no regular cardinals. It follows that there can be no $\Sigma_2$-reflecting cardinals and therefore also no remarkable cardinals, since every remarkable cardinal is $\Sigma_2$-reflecting.

It is natural to inquire whether our use of $0^\sharp$ can be weakened or eliminated. Perhaps it is natural for one to hope to prove that if the generic \Vopenka\ principle holds in $L$, then it continues to hold in our forcing extension $L[G]$. Doing so would not only improve the theorem, by weakening the hypotheses, but it would also address the possibly unnecessary meta-mathematical aspect of the argument, whereby we assume $0^\sharp$ in $V$, but then force over $L$.

But alas, if these hypotheses are consistent, then that will not be possible. The reason is that the generic \Vopenka\ principle, if consistent, has a strictly weaker consistency strength than the theory we obtain in $L[G]$, namely, the generic \Vopenka\ principle plus $\ORD$ is not $\Delta_2$-Mahlo, which implies that there are no $\Sigma_2$-reflecting cardinals and therefore no remarkable cardinals. By the bifurcation result of Bagaria, Gitman and Schindler \cite{BagariaGitmanSchindler:VopenkaPrinciple}, since there are no remarkable cardinals in our model, then there must be a proper class of virtually rank-into-rank cardinals, and the consistency strength of this is strictly higher than the generic \Vopenka\ principle itself (but less than $0^\sharp$).

We find it quite reasonable to expect to prove our theorem starting from a model with a suitable proper class of virtually rank-into-rank cardinals, replacing our indiscernibility embeddings with those virtual rank-into-rank embeddings. We leave the details of this to another project.

Meanwhile, it is easy to see that the existence of $0^\sharp$ has a strictly higher consistency strength than is necessary for our conclusion, where we have the generic \Vopenka\ principle with a $\Delta_2$-definable class club. The reason is that our conclusion is expressible in the first-order language of set theory and therefore reflects to the initial segments of $L$, cut off at any Silver indiscernible ordinal. So under $0^\sharp$, we are able to construct transitive models of our target theory, and so $0^\sharp$ is strictly stronger than necessary.

\section{Tying up a loose end}

We'd like to conclude our paper by tying up a certain loose end, by proving that some various large cardinals properties that have been considered in the literature in connection with the \Vopenka\ scheme are actually equivalent.

Recall that a cardinal $\kappa$ is \emph{$\Sigma_n$-correct}, if $V_\kappa\prec_{\Sigma_n}V$. Let $C^{(n)}$ be the class of $\Sigma_n$-correct cardinals. Bagaria~\cite{Bagaria:CnCardinals} proved that $\VP(\Sigma_{n+2})$ holds precisely when there is a proper class of $C^{(n)}$-extendible cardinals, where a cardinal $\kappa$ is $C^{(n)}$-extendible (in Bagaria's sense), if for every $\lambda>\kappa$, there is an elementary embedding $j:V_\lambda\to V_\theta$ with critical point $\kappa$ and $\lambda<j(\kappa)\in C^{(n)}$. It is easy to see that every extendible cardinal $\kappa$ is $C^{(1)}$-extendible, because the $j(\kappa)$ of an extendibility embedding $j$ is inaccessible.

An observant reader will have noticed, however, that there is a possible collision in the terminology, since we have two possibly different conceptions of what it means to be $C^{(n)}$-extendible. Namely, on the one hand, we have the notion of $C^{(n)}$-extendible in the sense of Bagaria, which we just defined in the previous paragraph. On the other hand, we have the notion of $C^{(n)}$-extendible in the sense defined in the introduction of this article, that is, $A$-extendible when $A$ happens to be the class $C^{(n)}$. There is also another concept of relative extendibility, due to Bagaria, namely, $\kappa$ is \emph{$C^{(n)+}$-extendible}, if for every $\lambda\in C^{(n)}$ above $\kappa$ there is $\theta\in C^{(n)}$ and an elementary embedding $j:V_\lambda\to V_\theta$ with critical point $\kappa$ and $\lambda<j(\kappa)$. Bagaria proved that the least $C^{(n)}$-extendible cardinal is also $C^{(n)+}$-extendible \cite{Bagaria:CnCardinals}. And recall from section~\ref{sec:LC-characterizations} our definition that $\kappa$ is \emph{$(\Sigma_n)$-extendible}, if it is $A$-extendible when $A$ is the $\Sigma_n$-truth predicate.

We shall now happily prove that all these notions coincide, and so there is actually no collision in the terminology after all!

\begin{theorem}The following are equivalent for any cardinal $\kappa$ and any particular finite $n\geq 1$.
\begin{enumerate}
  \item $\kappa$ is $C^{(n)}$-extendible in the sense of Bagaria, so that for every $\lambda>\kappa$, there is an elementary embedding $j:V_\lambda\to V_\theta$ with critical point $\kappa$ and $\lambda<j(\kappa)\in C^{(n)}$.
  \item $\kappa$ is $C^{(n)}$-extendible, that is, $A$-extendible where $A$ is the class $C^{(n)}$, so that for every $\lambda$ there is an elementary embedding $j:\<V_\lambda,\in,C^{(n)}\intersect V_\lambda>\to\<V_\theta,\in,C^{(n)}\intersect V_\theta>$ with critical point $\kappa$ and $\lambda<j(\kappa)$.
  \item $\kappa$ is $A$-extendible for every $\Sigma_n$-definable class $A$, allowing parameters in $V_\kappa$.
  \item $\kappa$ is $(\Sigma_n)$-extendible, that is, $A$-extendible where $A$ is a $\Sigma_n$-truth predicate.
  \item $\kappa$ is $C^{(n)+}$-extendible in the sense of Bagaria, so that for every $\lambda\in C^{(n)}$ above $\kappa$, there is an elementary embedding $j:V_\lambda\to V_\theta$ for some $\theta\in C^{(n)}$ with critical point $\kappa$ and $\lambda<j(\kappa)$.
\end{enumerate}
\end{theorem}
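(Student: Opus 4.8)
The plan is to run the equivalences through a hub, taking the $A$-extendibility of the $\Sigma_n$-truth predicate, condition $(4)$, as the central notion, and to organize everything around two mutually definable predicates: the class $C^{(n)}$ and the $\Sigma_n$-truth predicate $A$. The starting point is that $A$ and $C^{(n)}$ are each definable from the other without parameters --- $C^{(n)}$ is the class of $\alpha$ for which $A\cap V_\alpha$ computes genuine $\Sigma_n$-truth, and, wherever $C^{(n)}\cap\lambda$ is cofinal in $\lambda$, the predicate $A\restriction V_\lambda$ is recovered from $C^{(n)}\cap V_\lambda$ by evaluating each $\Sigma_n$ formula in a sufficiently large $V_\alpha$ with $\alpha\in C^{(n)}$ --- and moreover every $\Sigma_n$-definable class with parameters in $V_\kappa$ is definable from $A$ together with a parameter in $V_\kappa$. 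These observations immediately give the block $(2)\Leftrightarrow(3)\Leftrightarrow(4)$: an embedding preserving $A$ automatically preserves $C^{(n)}$ and every $\Sigma_n$-definable class, and conversely a $C^{(n)}$-preserving embedding recovers $A$, once we restrict to the cofinally many $\lambda$ for which $C^{(n)}\cap\lambda$ is cofinal; this restriction is harmless since extendibility at cofinally many $\lambda$ yields it at all $\lambda$ by cutting embeddings down.

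Second, I would bridge to the plain-embedding condition $(5)$ using the key structural observation that, for any $\lambda\in C^{(n)}$, the relations $\alpha\in C^{(n)}$ and $A\cap V_\lambda$ are definable over $V_\lambda$ \emph{without} any added predicate, because $V_\lambda\prec_{\Sigma_n}V$ makes the internal $\Sigma_n$-satisfaction relation of $V_\lambda$ agree with real truth. Consequently a plain elementary $j:V_\lambda\to V_\theta$ between two $\Sigma_n$-correct rank-initial segments is automatically $A$-preserving, which gives $(5)\Rightarrow(4)$ after restricting the domain. For $(4)\Rightarrow(5)$ I would feed $(4)$ a $\lambda$ chosen to be a limit point of $C^{(n)}$; then in the source structure the $A$-definable copy of $C^{(n)}$ is cofinal, so by elementarity the $A$-definable copy of $C^{(n)}$ in the target is cofinal in $\theta$, and since the target predicate is genuine $\Sigma_n$-truth this says that $C^{(n)}\cap\theta$ is really cofinal in $\theta$, whence $\theta\in C^{(n)}$ by closure of $C^{(n)}$; dropping the predicate and restricting the domain then witnesses $(5)$.

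Third, I would integrate condition $(1)$. The first task is to prove $\kappa\in C^{(n)}$, which I would do by induction on $n$ via a least-witness argument: given a $\Sigma_n$ fact $\exists y\,\psi(y,a)$ true in $V$ with $a\in V_\kappa$ and $\psi\in\Pi_{n-1}$, one uses an embedding $j:V_\lambda\to V_\theta$ with $\lambda,\theta$ (or $j(\kappa)$) $\Sigma_n$-correct and $\lambda<j(\kappa)$ to see that the least rank of a witness inside $V_\lambda$ must lie below $\kappa$, since otherwise it would be moved by $j$ while a genuine witness of that rank survives; the inductive hypothesis $\kappa\in C^{(n-1)}$ then transfers the witness down to $V_\kappa$. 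With $\kappa\in C^{(n)}$ in hand, $(5)\Rightarrow(1)$ is immediate: the $A$-preserving embedding supplied by $(5)$ satisfies $j(\kappa)\in C^{(n)}\Leftrightarrow\kappa\in C^{(n)}$, so $j(\kappa)\in C^{(n)}$ and restriction to the required domain gives $(1)$. For $(1)\Rightarrow(5)$ I would, given $\lambda\in C^{(n)}$, take the embedding $j:V_\lambda\to V_\theta$ granted by $(1)$ with $\lambda<j(\kappa)\in C^{(n)}$ and reflect the $\Sigma_1$ assertion ``there exist $\beta$ and an elementary $e:V_\lambda\to V_\beta$ with critical point $\kappa$ and $\lambda<\beta$'' into the $\Sigma_n$-correct (hence $\Sigma_1$-correct) model $V_{j(\kappa)}$, arranging that the reflected target additionally be $\Sigma_n$-correct relative to $V_{j(\kappa)}$, which below $j(\kappa)$ coincides with genuine membership in $C^{(n)}$.

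The hard part will be exactly this last step --- producing, from $(1)$, an embedding whose \emph{target} is $\Sigma_n$-correct. The subtlety is that the target $\theta$ of an arbitrary $C^{(n)}$-extendibility embedding need not itself lie in $C^{(n)}$, so one cannot simply read off $(5)$ from $(1)$; the content of the equivalence is precisely that a correct target can always be found, and making the reflection into $V_{j(\kappa)}$ deliver a target in $C^{(n)}$ (rather than merely some target below $j(\kappa)$) is where the real work lies. By contrast, the predicate bookkeeping of the first paragraph, the internal-definability bridge of the second, and the least-witness computation of $\kappa\in C^{(n)}$ are all routine once set up, and the passage from ``cofinally many $\lambda$'' to ``all $\lambda$'' is harmless because restricting $j:V_\lambda\to V_\theta$ to $V_{\lambda'}$ for $\lambda'<\lambda$ preserves both the critical point and the inequality $\lambda'<j(\kappa)$.
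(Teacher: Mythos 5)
Your organization of $(2)\Leftrightarrow(3)\Leftrightarrow(4)$ via the mutual definability of the $\Sigma_n$-truth predicate and $C^{(n)}$, the bridge $(4)\Leftrightarrow(5)$ via limit points of $C^{(n)}$, and the direction $(5)\Rightarrow(1)$ are all essentially sound and line up with the paper's cycle $5\to4\to3\to2\to1\to5$. The problem is the one direction you yourself flag as the crux, $(1)\Rightarrow(5)$: you describe the difficulty accurately, but the mechanism you propose does not yield a proof. Reflecting the assertion ``there exist $\beta\in C^{(n)}$ and an elementary $e:V_\lambda\to V_\beta$ with critical point $\kappa$ and $\lambda<e(\kappa)$'' into $V_{j(\kappa)}$ requires that assertion to hold already in the structure you reflect from; if you reflect from $V$, its truth in $V$ is exactly the instance of $(5)$ you are trying to prove, and the embedding granted by $(1)$ supplies no target in $C^{(n)}$. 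If instead you drop the $C^{(n)}$ clause so that the statement is true in $V$, reflection gives some $\beta<j(\kappa)$ with no reason to be $\Sigma_n$-correct; ``arranging that the reflected target additionally be $\Sigma_n$-correct'' is not something reflection provides for free. There is also a complexity mismatch: with the $C^{(n)}$ clause the statement is $\Sigma_{n+1}$ (since $C^{(n)}$ is $\Pi_n$), not $\Sigma_1$, so reflecting it into $V_{j(\kappa)}$ would need $j(\kappa)\in C^{(n+1)}$, whereas $(1)$ guarantees only $j(\kappa)\in C^{(n)}$.

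The paper closes this gap with a maneuver your plan is missing. First one shows that $(1)$ implies $\kappa\in C^{(n+2)}$ (Bagaria's reflection argument, stronger than the $\kappa\in C^{(n)}$ you establish). Then, given $\lambda\in C^{(n)}$, one applies $(1)$ not to $\lambda$ but to a larger $\bar\lambda\in C^{(n+2)}$, obtaining $j:V_{\bar\lambda}\to V_{\bar\theta}$ with $\bar\lambda<j(\kappa)\in C^{(n)}$. The restriction $j\restrict V_\lambda:V_\lambda\to V_{j(\lambda)}$ is an \emph{element} of $V_{\bar\theta}$, and by elementarity its target $j(\lambda)$ lies in $C^{(n)}$ \emph{as computed inside $V_{\bar\theta}$} (because $V_{\bar\lambda}$ correctly sees $\lambda\in C^{(n)}$), even though $V_{\bar\theta}$ may be wrong about the real $C^{(n)}$. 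So the $\Sigma_{n+1}$ assertion holds in $V_{\bar\theta}$; one reflects it \emph{within} $V_{\bar\theta}$ down to $V_{j(\kappa)}$, using that $V_{\bar\theta}$ believes $j(\kappa)\in C^{(n+2)}$; and finally one uses that $V_{j(\kappa)}$ computes $C^{(n)}$ correctly because $j(\kappa)$ really is $\Sigma_n$-correct. Until you supply this ``go up to $\bar\lambda$, restrict, and reflect inside the target model'' argument (or an equivalent), the cycle is not closed.
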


\begin{proof}
This is a theorem scheme, a separate theorem for each finite natural number $n$ in the meta-theory.

($5\to 4$) If $j:V_\lambda\to V_\theta$ for $\lambda,\theta\in C^{(n)}$, then both $V_\lambda$ and $V_\theta$ are correct about $\Sigma_n$-truth, and so $j$ is elementary in the language with a predicate for $\Sigma_n$-truth. In other words, $\kappa$ is $(\Sigma_n)$-extendible.

($4\to 3$) If $j:V_\lambda\to V_\theta$ is elementary with respect to the predicate for $\Sigma_n$-truth, then it is also elementary with respect to any $\Sigma_n$-definable class, allowing parameters in $V_\kappa$, since that class $A$ is definable from the $\Sigma_n$-truth predicate.

($3\to 2$) Immediate, since $C^{(n)}$ is $\Pi_n$-definable, and being elementary for the complement of a predicate is the same as being elementary for the predicate itself.

($2\to 1$) If $\kappa$ is $C^{(n)}$-extendible in the sense stated, then in particular, $\kappa$ must be a limit point of $C^{(n)}$ and hence also an element of $C^{(n)}$. So if $j:\<V_\lambda,\in,C^{(n)}\intersect V_\lambda>\to\<V_\theta,\in,C^{(n)}\intersect V_\theta>$ has critical point $\kappa$, it follows that $j(\kappa)\in C^{(n)}$, as needed for Bagaria's notion.

($1\to 5$) Assume that $\kappa$ is $C^{(n)}$-extendible in the sense of Bagaria, and consider any $\lambda\in C^{(n)}$ above $\kappa$. Since any $\Sigma_2(C^{(n)})$-assertion reflects below $\kappa$, it follows that $\kappa\in C^{(n+2)}$. Let $\bar\lambda\in C^{(n+2)}$ be larger than $\lambda$. By the extendibility assumption, we get an elementary embedding $j:V_{\bar\lambda}\to V_{\bar\theta}$ for some $\bar\theta$ with critical point $\kappa$ and $\bar\lambda<j(\kappa)\in C^{(n)}$. It follows that $V_{\bar\theta}$ is correct about $C^{(n)}$ below $j(\kappa)$, although it may be possibly wrong about $C^{(n)}$ above $j(\kappa)$. Let $\theta=j(\lambda)$, so that $j\restrict V_\lambda:V_\lambda\to V_\theta$ is an elementary embedding, and furthermore $j\restrict V_\lambda\in V_{\bar\theta}$. Since $\lambda\in C^{(n)}$ and $V_{\bar\lambda}$ knows this, it follows that $\theta=j(\lambda)$ is in $(C^{(n)})^{V_{\bar\theta}}$, even though $V_{\bar\theta}$ may disagree with $V$ about $C^{(n)}$. So $V_{\bar\theta}$ thinks that ``there is $\theta\in C^{(n)}$ and an elementary embedding $h$ from $V_\lambda$ to $V_\theta$, with critical point $\kappa$ and $\lambda<h(\kappa)$.'' Since the class $C^{(n)}$ is $\Pi_n$-definable, this is a $\Sigma_{n+1}$-expressible statement about $\kappa$ and $\lambda$, which is true in $V_{\bar\theta}$. Since $V_{\bar\lambda}$ knows that $\kappa\in C^{(n+2)}$, it follows that $V_{\bar\theta}$ thinks that $j(\kappa)$ is in $C^{(n+2)}$---although it could be wrong about this---and so the $\Sigma_{n+1}$ statement about $\kappa$ and $\lambda$ reflects from $V_{\bar\theta}$ to $V_{j(\kappa)}$. So $V_{j(\kappa)}$ thinks there is an ordinal $\theta'\in C^{(n)}$ and elementary embedding $h:V_\lambda\to V_{\theta'}$ with critical point $\kappa$ and $\lambda<h(\kappa)$. Since $V_{j(\kappa)}$ is right about $C^{(n)}$, this verifies that $\kappa$ is $C^{(n)+}$-extendible in $V$, as desired.
\end{proof}

Bagaria and Andrew Brooke-Taylor had previously shown that  every $C^{(n)}$-extendible cardinal is either $C^{(n)+}$-extendible or a limit of $C^{(n)+}$-extendibles \cite{BagariaBrooke-Taylor:OnColimitsElementaryEmbeddings}. We found out after submitting the paper that Tsaprounis had independently shown that every $C^{(n)}$-extendible cardinal is $C^{(n)+}$-extendible \cite{Tsaprounis:OnC-n-ExtendibleCardinals}.

\bibliography{gVP,HamkinsBiblio,MathBiblio}

\begin{thebibliography}{BHTU16}

\bibitem[Bag12]{Bagaria:CnCardinals}
Joan Bagaria.
\newblock {$C^{(n)}$}-cardinals.
\newblock {\em Arch. Math. Logic}, 51(3-4):213--240, 2012.

\bibitem[BBT13]{BagariaBrooke-Taylor:OnColimitsElementaryEmbeddings}
Joan Bagaria and Andrew Brooke-Taylor.
\newblock On colimits and elementary embeddings.
\newblock {\em J. Symbolic Logic}, 78(2):562--578, 2013.

\bibitem[BGS17]{BagariaGitmanSchindler:VopenkaPrinciple}
Joan Bagaria, Victoria Gitman, and Ralf Schindler.
\newblock Generic {V}op\v enka's {P}rinciple, remarkable cardinals, and the
  weak {P}roper {F}orcing {A}xiom.
\newblock {\em Arch. Math. Logic}, 56(1-2):1--20, 2017.

\bibitem[BHTU16]{BagariaHamkinsTsaprounisUsuba2016:SuperstrongAndOtherLargeCardinalsAreNeverLaverIndestructible}
Joan Bagaria, Joel~David Hamkins, Konstantinos Tsaprounis, and Toshimichi
  Usuba.
\newblock Superstrong and other large cardinals are never {L}aver
  indestructible.
\newblock {\em Arch. Math. Logic}, 55(1-2):19--35, 2016.
\newblock special volume in memory of R.~Laver.

\bibitem[For10]{Foreman:IdealsGenericElementaryEmbeddings}
Matthew Foreman.
\newblock Ideals and generic elementary embeddings.
\newblock In {\em Handbook of set theory. {V}ols. 1, 2, 3}, pages 885--1147.
  Springer, Dordrecht, 2010.

\bibitem[GHK]{GitmanHamkinsKaragila:KM-set-theory-does-not-prove-the-class-Fodor-theorem}
Victoria Gitman, Joel~David Hamkins, and Asaf Karagila.
\newblock Kelley-morse set theory does not prove the class {F}odor theorem.
\newblock in preparation.

\bibitem[GS]{GitmanSchindler:virtualCardinals}
Victoria Gitman and Ralf Schindler.
\newblock Virtual large cardinals.
\newblock To appear in the {P}roceedings of the Logic Colloquium 2015.

\bibitem[Ham]{Hamkins:The-Vopenka-principle-is-inequivalent-to-but-conservative-over-the-Vopenka-scheme}
Joel~David Hamkins.
\newblock The {Vop\v{e}nka} principle is inequivalent to but conservative over
  the {Vop\v{e}nka} scheme.
\newblock manuscript under review.

\bibitem[Sch01]{schindler:remarkable2}
Ralf-Dieter Schindler.
\newblock Proper forcing and remarkable cardinals. {II}.
\newblock {\em J. Symbolic Logic}, 66(3):1481--1492, 2001.

\bibitem[Sch14]{Schindler:RemarkableCardinals}
Ralf Schindler.
\newblock Remarkable cardinals.
\newblock In {\em Infinity, computability, and metamathematics}, volume~23 of
  {\em Tributes}, pages 299--308. Coll. Publ., London, 2014.

\bibitem[SRK78]{SolovayReinhardtKanamori1978:Strong-axioms-of-infinity-and-elementary-embeddings}
Robert~M. Solovay, William~N. Reinhardt, and Akihiro Kanamori.
\newblock Strong axioms of infinity and elementary embeddings.
\newblock {\em Ann. Math. Logic}, 13(1):73--116, 1978.

\bibitem[Tsa]{Tsaprounis:OnC-n-ExtendibleCardinals}
Kostas Tsaprounis.
\newblock On {$C^{(n)}$}-extendible cardinals.
\newblock Submitted.

\end{thebibliography}
\bibliographystyle{alpha}
\end{document}